\numberwithin{equation}{section}
\newcommand{\nt}{\noindent}
\newcommand{\RN}{\mathbb{R}^N}
\newcommand{\RR}{\mathbb{R}}
\newcommand{\pu}{\partial U}
\newtheorem{tm}{Theorem}[section]
\newtheorem{prop}{Proposition}[section]
\newtheorem{lem}{Lemma}[section]
\def\blfootnote{\xdef\thefnmark{}\footnotetext}
\title{Positive harmonic functions on comb-like domains \blfootnote{This research was supported by Science Foundation Ireland under Grant/RFP/MAT057, and is also part of the programme of the ESF Network ``Harmonic and Complex Analysis and Applications'' (HCAA).}}
\author{Joanna Pres \footnote{School of Mathematical Sciences, University College Dublin, Belfield, Dublin 4, Ireland; joanna.t.pres@gmail.com; joanna.pres@ucdconnect.ie}}
\date{}
\begin{document}
\maketitle
\begin{abstract} This paper investigates positive harmonic functions on a domain which contains an infinite cylinder, and whose boundary is contained in the union of parallel hyperplanes. (In the plane its boundary consists of two sets of vertical semi-infinite lines.) It characterizes, in terms of the spacing between the hyperplanes, those domains for which there exist minimal harmonic functions with a certain exponential growth.

\noindent {\bf Keywords} minimal harmonic functions, comb-like domain,  harmonic
measure

\noindent{\bf Mathematics Subject Classifications (2000)}  31B25, 31C35

\end{abstract}

\section{Introduction}
The subtle relationship between the structure of  positive harmonic  functions on a domain $\Omega$ in $\RN$ ($N\geq 2$) and  boundary geometry has been much studied. One avenue of investigation has been to examine the effect of modifying the boundary of a familiar domain such as a half-space,  cone or cylinder. Thus many authors have been led to investigate the case of Denjoy domains $\Omega$, where the complement $\RN\setminus\Omega$ is contained in a hyperplane, say $\mathbb{R}^{N-1}\times\{0\}$ (see \cite{ben, che, g1989, anc2, seg1, seg2, cartot, cagar, and, lom}). For example,
Benedicks \cite{ben} has established a harmonic measure criterion that describes when the cone of positive harmonic functions on $\Omega$ that vanish on the boundary $\partial\Omega$ is generated by two linearly independent minimal harmonic functions.  (We recall that a positive harmonic function $h$ on a domain $\Omega$ is called \emph{minimal} if any non-negative harmonic minorant of $h$ on $\Omega$ is proportional to $h$.) Benedicks' criterion is also equivalent to the existence of a harmonic function $u$ on $\Omega$ vanishing on $\partial\Omega$ and satisfying $u(x)\geq |x_N|$ on $\Omega$, and thus describes when a Denjoy domain behaves like the union of two half-spaces from the point of view of potential theory.
 Related results, based on sectors, cones or cylinders, may be found in \cite{cransal, lom, gp}. The purpose of this paper is to describe what happens in the case of another relative of the infinite cylinder. More precisely, let $(a_n)$ be a strictly increasing sequence of non-negative numbers such that $a_n\rightarrow +\infty$ and $a_{n+1}-a_n\rightarrow 0$ as $n\rightarrow \infty$,
and let $B'$ be the unit ball in $\RR^{N-1}$.
We define $$E=\bigcup_{n\in \mathbb{N}}(\RR^{N-1}\setminus B')\times \{a_n\}$$ and investigate when the domain $\Omega=\RN \setminus E$ inherits the potential theoretic character of the cylinder $U=B'\times \RR$; that is, when the set $E$ imitates $\pu$ in terms of its effect on the asymptotic behaviour of positive harmonic functions on $\Omega$. We call such domains $\Omega$ \emph{comb-like} because they are a generalization of comb domains in the plane.

Let $x=(x^{\prime},x_{N})$ denote a typical point of Euclidean space
$\RN=\mathbb{R}^{N-1}\times \mathbb{R}$.
It is known (see \cite{g}, for example) that the cone of positive harmonic functions on $U$ that vanish on $\pu$ is generated by two minimal harmonic functions $h_{\pm }(x^{\prime },x_{N})=e^{\pm \alpha
x_{N}}\phi (x^{\prime })$, where $\alpha $ is the square root of
the first eigenvalue of the operator
$-\Delta=-\sum_{j=1}^{N-1}\partial^2/\partial x_j^2$  on $B'$ and
$\phi$ is the corresponding eigenfunction, normalized by
$\phi(0)=1$.
We describe below when  a comb-like domain  admits a minimal harmonic function $u$  that  vanishes on $\partial \Omega$ and satisfies $u\geq h_+$ on $U$.

\begin{tm}\label{big}Let $\nu>1$. Assume that $(a_n)$ satisfies the following condition
\begin{equation}\label{r} \frac{1}{\nu}\leq \frac{a_{k+1}-a_k}{a_{j+1}-a_j}\leq \nu
\end{equation}
whenever $|a_k-a_j|<4$.
The following statements are equivalent:
\begin{enumerate}
\item[(a)] there exists a positive harmonic function $u$ on $\Omega$ that satisfies $u\geq h_+$ on $U$ and $u$ vanishes continuously on $E$;
\item[(b)]
$\sum_{n=1}^\infty (a_{n+1}-a_n)^2<+\infty$.
\end{enumerate}
Moreover, if (b) holds, then $u$ can be chosen to be minimal in part (a).
\end{tm}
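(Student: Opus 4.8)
The plan is to reduce the whole question to the behaviour of the single sequence $m_n:=u(0,a_n)$, and for that to analyse precisely how a positive harmonic function vanishing on $E$ is transmitted through one slab of the comb. Above the level $a_1$ the domain splits as $\Omega\cap\{x_N>a_1\}=\bigcup_{n\ge1}V_n$, where $V_n=\RR^{N-1}\times(a_n,a_{n+1})$ is a slab of thickness $\delta_n:=a_{n+1}-a_n\to0$ and consecutive slabs communicate only through the window $W_n:=B'\times\{a_n\}$, the remainder of the hyperplane $\{x_N=a_n\}$ belonging to $E$. Each tooth is a whole half-hyperplane, so $E$ is uniformly fat and every boundary point is regular; together with \eqref{r}, which makes the gap size locally constant up to the factor $\nu$, this puts at my disposal a scale-invariant Carleson/boundary Harnack estimate with constants depending only on $\nu$ and $N$. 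I would record two preliminary facts. (i) If $u\ge h_+$ on $U$ then $m_n\ge e^{\alpha a_n}$ (evaluate at $(0,a_n)$ and recall $\phi(0)=1$); conversely a lower bound $m_n\gtrsim e^{\alpha a_n}$ can be bootstrapped, via the boundary Harnack principle together with the transmission estimate below, to $u\gtrsim h_+$ on all of $U$. (ii) Brownian motion in a slab $V_n$ reaches the union of its two faces in finite time, so the harmonic measure there of ``horizontal infinity'' vanishes and a positive harmonic function on $V_n$ that vanishes on the teeth equals the integral of its values on $W_n\cup W_{n+1}$ against that harmonic measure; hence bounds, and the oscillation of a ratio of two such functions, propagate from the windows into the whole slab.

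\noindent\emph{The crux.}
The heart of the argument is a sharp transmission estimate: for every positive harmonic function $u$ on $\Omega$ that vanishes on $E$ and satisfies $m_n\to\infty$,
\[
\log\frac{m_{n+1}}{m_n}=\alpha\,\delta_n-\kappa_n\,\delta_n^{\,2},\qquad c_1\le\kappa_n\le c_2,
\]
for constants $c_2\ge c_1>0$ depending only on $\nu$ and $N$. The intuition is that a thin slab behaves like the cross-section $B'$ of the cylinder, which accounts for the rate $\alpha$, except that the teeth meeting the rim of the window force a perturbation of the principal cross-sectional eigenvalue $\alpha^2$; since the window keeps the full radius $1$ while the slab has thickness $\delta_n$, this perturbation is of \emph{second} order, of size $\asymp\delta_n^{2}$. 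Making this rigorous --- by constructing explicit super- and subsolutions in $V_n$, and across $V_{n-1}\cup V_n\cup V_{n+1}$ so as to control the dependence on the neighbouring window values, that pin the quotient down to within $O(\delta_n^{2})$ uniformly in $n$ (this uniformity being precisely what hypothesis \eqref{r} secures) --- is the step I expect to be the main obstacle.

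\noindent\emph{The equivalence.}
Granting the transmission estimate, $m_n e^{-\alpha a_n}$ is comparable, uniformly in $n$, to the partial products $\prod_{k<n}(1-\kappa_k\delta_k^{2})$, which converge to a strictly positive limit if and only if $\sum_k\delta_k^{2}<\infty$. If (a) holds, then by (i) $m_n\ge e^{\alpha a_n}$, so $m_n e^{-\alpha a_n}$ is bounded below, forcing $\sum_k\delta_k^{2}<\infty$; this is (b). Conversely, suppose (b) holds. Let $u_m$ be the bounded harmonic function on $\Omega\cap\{x_N<a_m\}$ with boundary values $\phi$ on $W_m$ and $0$ on $E$, normalised by $u_m(x_0)=1$ at a fixed $x_0\in U$. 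By Harnack's inequality the $u_m$ are locally uniformly bounded, so along a subsequence they converge to a positive harmonic function $u^\ast$ on $\Omega$ vanishing continuously on $E$; applying the transmission estimate to each $u_m$ (valid for levels well below $a_m$, where $u_m$ is non-decreasing along the axis) and letting $m\to\infty$ gives $u^\ast(0,a_n)\ge c\,e^{\alpha a_n}$ for some $c>0$ (here $\sum_k\delta_k^{2}<\infty$ is used), and the boundary Harnack principle then upgrades this to $u^\ast\gtrsim h_+$ on $U\cap\{x_N>1\}$. Since a positive harmonic function on $\Omega$ cannot decay faster than exponentially as $x_N\to-\infty$ (the half-space $\{x_N<a_1\}$ lying inside $\Omega$), one obtains $u^\ast\gtrsim h_+$ on all of $U$, and a suitable scalar multiple of $u^\ast$ is the required $u$; this is (a). (The existence half may equally be phrased as the slab-by-slab construction of a superharmonic majorant of the subharmonic function that equals $h_+$ on $U$ and $0$ on $\RN\setminus U$, the condition $\sum_k\delta_k^{2}<\infty$ being exactly what keeps the successive window values comparable to $e^{\alpha a_n}$.)

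\noindent\emph{Minimality.}
Under (b) the transmission estimate also makes $m_n e^{-\alpha a_n}$ bounded above, so $u^\ast\asymp h_+$ on $U\cap\{x_N>1\}$. Iterating the transmission estimate (and using (ii) to pass from windows to slabs) yields a boundary Harnack principle at $x_N=+\infty$: every positive harmonic function on $\Omega\cap\{x_N>T\}$ vanishing on $E\cap\{x_N>T\}$ is, on $\{x_N>T+1\}$, comparable to a scalar multiple of $u^\ast$, with constants independent of $T$. By the standard criterion this shows that the Martin boundary of $\Omega$ over $x_N=+\infty$ consists of a single minimal point whose kernel is $u^\ast$ up to normalisation; consequently $u^\ast$, and hence the scalar multiple of it produced in part (a), is minimal. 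This completes the proof.
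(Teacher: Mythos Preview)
Your strategy is quite different from the paper's, and the intuition is sound, but the transmission estimate you single out as the crux is both unproved and, as stated, almost certainly too strong. Passage through a slab $V_n$ is an \emph{operator} on the window profile $u(\cdot,a_n)|_{W_n}$, not scalar multiplication of $m_n=u(0,a_n)$. The boundary Harnack principle guarantees only that any two admissible profiles are comparable up to a fixed factor $C(\nu,N)$; that is far too coarse to pin down $\log(m_{n+1}/m_n)$ to within $O(\delta_n^{2})$, since a bounded oscillation of the profile can move the ratio by a bounded factor, not by $1+O(\delta_n^{2})$. To obtain the sharp second-order correction uniformly you would need the profile on $W_n$ to be $\phi$ up to an $O(\delta_n)$ error, i.e.\ a spectral-gap/settling argument that you nowhere supply. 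A second issue: the $u$ handed to you in (a) need not be bounded on each slab (the Martin boundary of $\Omega$ has points at horizontal infinity in every gap), so the harmonic-measure representation in your item (ii) is not automatically available for it.

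The paper sidesteps both problems by never reducing to a one-dimensional recursion. For (a)$\Rightarrow$(b) it simply integrates $u$ against the Poisson kernel of the \emph{cylinder} $U$: since $u\ge h_+$, Harnack near $\partial U$ yields $u(y)\gtrsim e^{\alpha y_N}\delta_n$ on the middle half of $\partial B'\times(a_n,a_{n+1})$, and $u(0)<\infty$ then forces $\sum\delta_n^{2}<\infty$ in three lines---without using \eqref{r}. For (b)$\Rightarrow$(a) the paper does not work slab by slab at all; it builds an increasing sequence $s_0\le s_1\le\cdots$ by alternately solving the Dirichlet problem on $U$ and on an outer annular domain $V=A(r',\infty)\setminus E'$, each $s_k$ living on the whole of $\RN$. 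The role of your transmission estimate is played by two global ingredients: a harmonic-measure bound $\mu_x^{T_x\setminus E''}(F_x)\lesssim\delta_n$ on a fixed-size tube $T_x$ about each $x\in\partial U$ (this is where \eqref{r} enters), and a Domar-type maximum principle showing that any admissible $u$ on $T_x$ is controlled by $u(rx',x_N)$ times that harmonic measure. Together these give geometric decay of the increments $s_{2k+2}-s_{2k}$ with rate proportional to $\Lambda=\sum\delta_n^{2}$, so $u'=\lim s_k$ is finite and $\ge h_+$. Minimality is then read off from the identity $u'=H^U_{u'}+h_+$ via the Martin representation and the minimality of $h_+$ on $U$, rather than from a boundary-Harnack principle at $+\infty$.
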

We will prove Theorem \ref{big} by combining methods from \cite{g1989}, \cite{cransal} and \cite{gp} with some new ideas. It is known (see \cite{burk, car, g1991}) that the behaviour of  minimal harmonic functions on  simply connected domains is intimately related to the classical  angular derivative problem. We note that when $N=2$,  condition (b) of Theorem \ref{big} is necessary and sufficient for the comb domain
$\Omega$ to have an angular derivative at $+\infty$ (see \cite{rodwar1, rodwar2, jen}).

\section{Notation and preliminary results}
We use $\partial^\infty D$ to denote the boundary of a domain $D$ in compactified space $\RN\cup\{\infty\}$.
Let $B_\rho(x)$ denote the open ball in $\RR^N$ of centre
$x$ and radius $\rho>0$.  We write $B'_\rho$ (resp.
$B_\rho$) for the open ball in $\RR^{N-1}$ (resp. $\RR^N$) of centre $0$ and radius $\rho$, and $V(\rho)=\partial
B'_\rho\times \RR$.  If $\rho=1$, we write $B'$ instead of
$B'_1$.
For $0<\rho_1<\rho_2$ let $A(\rho_1,
\rho_2)=\left(B'_{\rho_2}\setminus \overline{B'_{\rho_1}}\right)\times \RR$.
We denote by $\mu_x^D$ the harmonic
measure for an open set
$D\subset \RN$ evaluated at $x\in D$. If $f$ is a function
defined on $\partial^{\infty} D$, we use $\overline{H}_f^D$  to
denote the upper Perron-Wiener-Brelot solution to the Dirichlet
problem on $D$ and $H_f^D$  for the PWB solution of the
Dirichlet problem on $D$ when it exists. We  denote by $P_D(\cdot, y)$
the Poisson kernel for $D$ with pole $y\in\partial D$, where
$\partial D$ is smooth enough for it to be defined.
If $W\subseteq D$ and $u$ is a superharmonic function on $D$,
we denote by $R^W_u$ (resp. $\widehat R^W_u$)  the reduced
function (resp. the regularized reduced function) of $u$ relative
to $W$ in $D$. We denote surface area measure on a given surface by $\sigma$.
We use $C(a,b,...)$ to denote a constant depending at most on $a,b,...$,
the value of which may change from line to line.

 For the remainder of the paper, we  fix $0<r<1<R$ and for $x\in \pu$ we define $F_x=\partial B'_r\times [x_N-1,x_N+1]$ and $$T_x=(B'_R\setminus \overline{B'_r})\times (x_N-1, x_N+1 ).$$

We  note that the first eigenfunction $\phi$ of $-\Delta$ in $B'$ is comparable with the distance to  $\partial B'$, that is
\begin{equation}\label{phi}
C_1(N)(1-|x'|)\leq \phi(x')\leq C_2(N) (1-|x'|)\quad(x'\in B').
\end{equation}
A simple proof of \eqref{phi} can be found in \cite[pp.\! 419-420]{gaha}.
The following estimate for the Poisson kernel  (see \cite[Section 2.1]{gp}, for example) will prove useful. For $|x'|=s<1, x_N\in\RR, y\in\partial U$
\begin{equation}\label{pinterior}
 C_1(N,s) e^{-\alpha |x_N-y_N|}\leq P_U(x,y)\leq C_2(N,s)  e^{-\alpha |x_N-y_N|}.
\end{equation}
If $0<r_1<s<r_2$, similar estimates hold for $P_{A(r_1, r_2)}$ with $\alpha$ replaced by the square root of
the first eigenvalue of $-\Delta$ in $B'_{r_2}\backslash
\overline{B'_{r_1}}$ and constants $C_1, C_2$ depending on
$N, r_1, r_2$ and  $s$.

\begin{prop}\label{aimpliesb} Assume there exists a positive harmonic function $u$ on $\Omega$ such that $u\geq h_+$ on $U$ and $u$ vanishes on $E$. Then
\begin{equation}\label{series}\sum_{n=1}^{\infty}(a_{n+1}-a_n)^2<+\infty.
\end{equation}
\end{prop}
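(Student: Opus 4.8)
The plan is to show that the existence of such a $u$ forces an upper bound on the harmonic measure (evaluated at a fixed point of $U$) of each ``gap annulus'' $A_n = (B'_R \setminus \overline{B'_r}) \times (a_n, a_{n+1})$ near the cylinder wall, and that these estimates, summed, yield the convergence of $\sum (a_{n+1}-a_n)^2$. First I would restrict attention to a fixed point $x_0 = (0', t_0) \in U$ with $t_0$ large, and consider the behaviour of $u$ on the region $T_x$ associated with the mid-points of the gaps. The key quantitative input is a \emph{lower} estimate: because $u \geq h_+ = e^{\alpha x_N}\phi(x')$ on $U$, and $u$ vanishes on $E$, the function $u$ cannot be ``too small'' on the slices $B'\times\{a_n\}$ inside the cylinder; pairing this with the maximum principle on the building block $T_x$ (using the Poisson kernel estimate \eqref{pinterior} and the comparison \eqref{phi} of $\phi$ with the distance to $\partial B'$) shows that $u$ must carry a definite amount of mass across the annular gaps $A(r,R)\cap\{a_n < x_N < a_{n+1}\}$.

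The heart of the argument is then a \emph{local} comparison near a single gap. Fix $n$ and set $\delta_n = a_{n+1}-a_n$. On the box $T$ sitting over the $n$th gap, I would compare $u$ with a harmonic function in the perforated box $T \setminus (E \cap \overline T)$, which consists of the box with two horizontal ``lids'' removed (the portions of the hyperplanes $\{x_N = a_n\}$ and $\{x_N = a_{n+1}\}$ lying outside $B'_r$, but only out to radius $R$). The regularity hypothesis \eqref{r} is exactly what guarantees that neighbouring gaps have comparable size, so that this local picture is uniform in $n$ and the constants do not degrade. A harmonic-measure (or extremal-length) estimate in this perforated box gives that the harmonic measure of the ``inner wall'' $\partial B'_r \times [a_n, a_{n+1}]$, seen from the axis, is comparable to $\delta_n$ for small $\delta_n$ --- more precisely, the probability of a Brownian motion started near the axis reaching the inner cylinder before hitting one of the lids behaves like $c\,\delta_n$; squaring enters because one needs \emph{both} to cross the annular gap radially (cost $\asymp \delta_n$, since the gap has height $\delta_n$ and fixed width $R-r$) \emph{and} the value of $u$ transported through is itself of order $\delta_n \cdot e^{\alpha a_n}$ relative to $h_+(x_0)$-normalized quantities. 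Carefully bookkeeping the normalizing factor $e^{\alpha a_n}$ (which cancels against the exponential decay of the Poisson kernel over the distance $|t_0 - a_n|$) leaves a clean inequality $\sum_n c\,\delta_n^2 \leq u(x_0) < \infty$.

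Concretely, the steps in order are: (i) fix the reference point $x_0$ and normalize; (ii) for each $n$, use $u\geq h_+$ and the vanishing of $u$ on $E$ together with the maximum principle on $T$ over the $n$th gap to produce a lower bound, of order $\delta_n^2 e^{\alpha a_n}$, for the harmonic measure integral $\int u\, d\mu$ over the part of $E$ bounding that gap --- equivalently a lower bound for a ``flux'' of $u$ through the gap; (iii) sum these local contributions and dominate the total by $u(x_0)$ via the fact that $u$ is a positive harmonic function on $\Omega$, whose boundary values on $E$ are zero, so that the sum of the fluxes is controlled (one can make this rigorous by writing $u(x_0)$ as an integral of boundary data against $\mu^{\Omega}_{x_0}$ on a large truncation and letting the truncation grow, using Fatou/monotone convergence); (iv) conclude $\sum \delta_n^2 < \infty$.

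The main obstacle I anticipate is step (ii): correctly extracting the quadratic power of $\delta_n$. The linear factor is the ``radial crossing cost'' of a thin gap; the second linear factor comes from the size of $u$ itself on the slice just inside the cylinder, which one must bound below --- and this lower bound is not immediate from $u \geq h_+$ alone, since $h_+$ is comparable to the distance to $\partial B'$ and hence \emph{small} near $\partial B'_r$ when $r$ is close to $1$; it requires an interior Harnack-type propagation from the axis together with the fact that the nearby gap slabs are thin (otherwise $u$ could in principle decay sharply near $E$). The hypothesis \eqref{r}, ensuring comparability of consecutive $\delta_n$, is what makes this propagation uniform. Once the two linear factors are in hand and the exponential normalizations are tracked, the summation in steps (iii)--(iv) is routine.
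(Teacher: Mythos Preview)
Your intuition about the two linear factors of $\delta_n$ is correct, but the execution is overcomplicated and contains a real confusion, and you invoke machinery that the paper reserves for the hard direction $(b)\Rightarrow(a)$.

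First, you repeatedly call on the ratio condition \eqref{r}. The paper explicitly notes (see the Remark after the proof of Theorem~\ref{big}) that $(a)\Rightarrow(b)$ does \emph{not} use \eqref{r} at all; your belief that it is ``exactly what guarantees'' uniformity is mistaken for this direction. Second, your step~(ii) speaks of ``the harmonic measure integral $\int u\,d\mu$ over the part of $E$ bounding that gap''. But $u$ vanishes on $E$, so any such integral is zero; what you need is an integral over $\partial U$ \emph{inside} the gap $\partial B'\times(a_n,a_{n+1})$, which lies in $\Omega$, not in $E$. This geometric confusion propagates into your ``flux'' heuristic and makes steps (ii)--(iii) vague rather than rigorous. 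Third, the $T_x$ boxes, perforated-box harmonic measure, and Brownian crossing estimates you propose are the tools the paper uses for the converse implication (Lemmas~\ref{beta1}--\ref{finite}); they are unnecessary here.

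The paper's proof is short and direct. Since $u$ is positive and harmonic on $U$, one has
\[
u(0)\ \ge\ \int_{\partial U} u(y)\,P_U(0,y)\,d\sigma(y)\ \ge\ C(N)\sum_{n}\int_{\partial B'\times(a_n,a_{n+1})} u(y)\,e^{-\alpha y_N}\,d\sigma(y),
\]
using \eqref{pinterior}. For $y\in\partial U$ with $y_N$ in the middle half of $(a_n,a_{n+1})$, the ball $B_{\delta_n/4}(y)$ lies in $\Omega$; a single application of Harnack moves $y$ inward to $((1-\delta_n/8)y',y_N)\in U$, where $u\ge h_+$ and $\phi((1-\delta_n/8)y')\ge C(N)\,\delta_n$ by \eqref{phi}. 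Thus $u(y)\ge C(N)\,\delta_n\,e^{\alpha y_N}$ on that middle half, and integrating over it (length $\asymp\delta_n$) gives a contribution $\ge C(N)\,\delta_n^2$ to the sum above. No box constructions, no \eqref{r}, no ``flux'' bookkeeping are needed; your two factors of $\delta_n$ are simply (i) the value of $\phi$ at distance $\delta_n$ from $\partial B'$, and (ii) the length of the gap on $\partial U$.
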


\begin{proof} By \eqref{pinterior} we have
\begin{equation}\label{pois}
\begin{aligned}
+\infty>u(0)\geq \int_{\pu}u(y) P_U(0,y)d\sigma(y)\geq C(N)\sum_{n=1}^{\infty}\int_{\partial B'\times (a_n,a_{n+1})} u(y)e^{-\alpha y_N}d\sigma(y).
\end{aligned}
\end{equation}
We use Harnack's inequalities and \eqref{phi} to see that for $y\in \pu$ with $$y_N\in\big(a_n+(a_{n+1}-a_n)/4, a_{n+1}-(a_{n+1}-a_n)/4\big)$$ the following holds
\begin{align}
u(y)&\geq C(N)u\left((1-(a_{n+1}-a_n)/8)y',y_N\right)\nonumber \\
&\geq C(N)e^{\alpha y_N}\phi\left((1-(a_{n+1}-a_n)/8)y'\right) \nonumber \\
& \geq C(N)e^{\alpha y_N}(a_{n+1}-a_n). \label{haru}
\end{align}
We deduce from \eqref{pois} and \eqref{haru} that \eqref{series} holds.
\end{proof}
\medskip

Assume now that $\sum_{n=1}^\infty (a_{n+1}-a_n)^2<+\infty$. Let $J\in \mathbb{N}$ be large enough so that $a_{n+1}-a_n\leq 1/2$ for $n\geq J$. For ease of exposition we rename the sequence $(a_n)_{n=J}^{\infty}$ as $(b_n)_{n=1}^{\infty}$. We also define $\rho_n=(b_{n+1}-b_n)/2$ for $n\in \mathbb{N} $. We introduce $b_0=b_1-1$ and $\rho_0=1/2$. For  technical reasons, we will work with
\begin{equation*}
E''=\bigcup_{n=1}^\infty\left(\RR^{N-1}\setminus B'\right)\times \{b_n\} \ \ \mbox{and}\ \  E'=\left(\partial B'\times (-\infty, b_1]\right)\cup E'',
\end{equation*}
and at the end we will dispense with these additional requirements.

\begin{lem} \label{beta1} There exists a positive constant $c_1$, depending on $N,R$ and $r$, such that for any $x\in\pu$ we have
\begin{equation}\label{beta}
\mu_x^{T_x\backslash E''}(F_x)\leq \mu_x^{T_x\backslash E''}(\partial
{T_x})\leq c_1\ \mu_x^{T_x\backslash E''}(F_x).
\end{equation}
\end{lem}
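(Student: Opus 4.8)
The plan is to prove the right‑hand inequality in \eqref{beta}; the left one is immediate, since $F_x\subseteq\partial T_x\subseteq\partial D$ with $D:=T_x\setminus E''$, so it is just monotonicity of $\mu_x^D$. Writing $\Gamma:=\partial T_x\setminus F_x$, it is enough to find $C=C(N,r,R)$ with $\mu_x^D(\Gamma)\le C\,\mu_x^D(F_x)$ and take $c_1=1+C$. I would first discard the trivial case $x_N\in\{b_m\}$: then $x$ lies on a slit of $E''$, hence is a regular boundary point of $D$, and as neither $F_x$ nor $\partial T_x$ contains $x$ both sides vanish. So assume $x_N\notin\{b_m\}$; then $x\in D$ automatically, because $r<1<R$ and $x_N$ is the midpoint of the vertical interval of $T_x$. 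Set $\delta:=\tfrac14\min(1-r,R-1)$.

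The one quantitative ingredient I would isolate is a uniform lower bound for $\mu_\cdot^D(F_x)$ coming from the slit‑free inner part. Since $E''\subseteq\{|w'|\ge1\}\times\RR$, the annular box $W:=(B'_{1-\delta/4}\setminus\overline{B'_r})\times(x_N-1,x_N+1)$ is disjoint from $E''$ and lies in $D$, with $F_x\subseteq\partial W\cap\partial D$; by the maximum principle $\mu_\cdot^D(F_x)\ge\mu_\cdot^W(F_x)$ on $W$. As $W$ is a fixed domain up to a vertical translation, with $F_x$ a fixed nonempty portion of $\partial W$, the positive harmonic function $z\mapsto\mu_z^W(F_x)$ (extended by $1$ along $|w'|=r$) is bounded below by some $c_0=c_0(N,r,R)>0$ on the compact set $\{r\le|z'|\le1-\tfrac{\delta}{2},\ |z_N-x_N|\le\tfrac12\}$. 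Thus
\begin{equation*}
\mu_z^D(F_x)\ \ge\ c_0\qquad\text{whenever}\ z\in D,\ |z'|\le1-\tfrac{\delta}{2},\ |z_N-x_N|\le\tfrac12.\tag{$\dagger$}
\end{equation*}

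I would then split into two cases. If $\operatorname{dist}(x,E'')\ge\delta$, then $B_\delta(y_0)\subseteq D$ for $y_0:=((1-\tfrac{\delta}{2})x',x_N)$: its points have $|w'|\in(1-\tfrac{3\delta}{2},1+\tfrac{\delta}{2})\subset(r,R)$ and height within $\delta$ of $x_N$, so it avoids $\partial T_x$, and it meets no slit, since a slit inside it would force some $b_m$ with $|b_m-x_N|<\delta$. As $x\in B_{3\delta/4}(y_0)$, Harnack's inequality and $(\dagger)$ give $\mu_x^D(F_x)\ge c(N)\,\mu_{y_0}^D(F_x)\ge c(N)c_0$, and since $\mu_x^D(\Gamma)\le1$ this case is finished. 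In the remaining case $\operatorname{dist}(x,E'')<\delta$; let $\xi=(x',b_n)\in E''$ realise the distance, so $|b_n-x_N|<\delta$. Inside $B_{2\delta}(\xi)$ the set $\partial D$ consists only of slits (being too far from $\partial B'_r\times\RR$, from $\partial B'_R\times\RR$, and — the heights in play lying within $3\delta<1$ of $x_N$ — from the lids), and on those slits both $\mu_\cdot^D(\Gamma)$ and $\mu_\cdot^D(F_x)$ vanish continuously. Near $\xi$ the complement of $D$ is a union of parallel half‑hyperplanes abutting the cylinder $\{|w'|=1\}$, hence uniformly fat with purely dimensional constants, so the boundary Harnack principle holds on $B_{2\delta}(\xi)$ with a constant depending only on $N$, yielding
\begin{equation*}
\frac{\mu_x^D(\Gamma)}{\mu_x^D(F_x)}\ \le\ C(N)\,\frac{\mu_{z_0}^D(\Gamma)}{\mu_{z_0}^D(F_x)},\qquad z_0:=((1-\delta)x',b_n),
\end{equation*}
with $z_0$ a corkscrew point at scale $2\delta$ (one checks $\operatorname{dist}(z_0,\partial D)\ge\delta$). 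Finally $\mu_{z_0}^D(\Gamma)\le1$, and $(\dagger)$ applies to $z_0$ (here $|z_0'|=1-\delta$, $|b_n-x_N|<\delta<\tfrac12$), so $\mu_{z_0}^D(F_x)\ge c_0$; combining, $\mu_x^D(\Gamma)\le(C(N)/c_0)\,\mu_x^D(F_x)$, completing the proof with $c_1=1+\max\{(c(N)c_0)^{-1},C(N)/c_0\}$.

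The hard part, I expect, is exactly this last case. When slits of $E''$ accumulate arbitrarily close to $x$, $\mu_x^D(F_x)$ has no lower bound in isolation and must be compared head‑on with $\mu_x^D(\partial T_x)$; the argument works because near such an $x$ the boundary of $D$ consists of nothing but slits, so both harmonic functions vanish on a common set, and because the local geometry is scale invariant, so the boundary Harnack constant does not see the spacing of the $b_n$. In particular hypothesis \eqref{r} is not invoked for this lemma, in keeping with $c_1$ depending on $N,R,r$ alone.
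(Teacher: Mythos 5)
Your reduction to bounding $\mu_x^D(\Gamma)$ by $C\,\mu_x^D(F_x)$, the uniform lower bound $(\dagger)$ obtained from the slit-free annular box $W$, and the easy case $\operatorname{dist}(x,E'')\geq\delta$ are all fine. The problem is the step you yourself identify as the hard one. You dispose of the case where slits accumulate near $x$ by asserting that, because $E''$ is uniformly fat near $\xi$ with dimensional constants, a boundary Harnack principle holds on $B_{2\delta}(\xi)$ with a constant depending only on $N$. That is not a theorem: the capacity density condition alone does not imply a boundary Harnack principle. The known scale-invariant BHP results require in addition a connectivity hypothesis (NTA, or Aikawa's ``uniform domain plus CDC''), and precisely this hypothesis fails here with uniform constants: if two consecutive slits $b_m,b_{m+1}$ have spacing $\varepsilon\ll\delta$, then points on opposite sides of a slit deep inside the comb region are at Euclidean distance $O(\varepsilon)$ but can only be joined inside $D$ by a path of length $\gtrsim\delta$ passing through the cylinder, so the uniformity (and John) constants of $B_{2\delta}(\xi)\cap D$ degenerate as the spacing shrinks. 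Whether a BHP-type comparison at the mouth of such a gap holds with a constant independent of the spacing is exactly the delicate issue in comb/Denjoy-type geometry (it is why results of Benedicks type are nontrivial), so it cannot be waved through by fatness alone; as written, the central inequality of your Case 2 is unsupported. The desired comparison at the specific point $x\in\pu$ is in fact true, but establishing it needs a genuine argument (e.g.\ estimating the exponentially small harmonic measure of escape outward through a gap of width $\varepsilon$ against the polynomially small probability of entering the cylinder, uniformly in the position of $x$ within the gap), none of which appears in the proposal.

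For comparison, the paper avoids all of this with a maximum-principle trick that never localizes near the slits. Setting $h=H^{T_x}_{\chi_{F_x}}$ in the \emph{full} box $T_x$, a reflection in $\RR^{N-1}\times\{x_N+1\}$ combined with vertical translation shows that $h(y)\leq h(x)$ for $y\in\partial B'\times(x_N-1,x_N+1)$, and the maximum principle then gives $h\leq h(x)$ on all of $E''\cap T_x$. Representing $h$ in $T_x\setminus E''$ by its boundary values ($\chi_{F_x}$ on $\partial T_x$, at most $h(x)$ on $E''\cap T_x$) and evaluating at $x$ yields $\mu_x^{T_x\setminus E''}(\partial T_x)\leq \mu_x^{T_x\setminus E''}(F_x)/h(x)$, and $h(x)=\mu^{T_x}_x(F_x)$ is bounded below by a constant depending only on $N,r,R$ by translation and rotation invariance. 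This gives \eqref{beta} with a constant visibly independent of the spacing of the $b_n$ (no appeal to \eqref{r}), which is the point your BHP route would still have to earn.
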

\begin{proof} Let $x\in \pu$. The left hand inequality in \eqref{beta} is obvious since  $F_x\subset \partial
T_x$.
Let $h=H^{T_x}_{\chi_{F_x}}$ on $T_x$ and $h=\chi_{F_x}$ on $\partial T_x$. In order to establish the right hand inequality, it is enough to prove that
\begin{equation}\label{2.2} h\leq h(x) \quad \textrm{on } E''\cap T_x.
\end{equation}
We will borrow an argument from \cite[Lemma 2.1]{gp}. Using  reflection in $\RR^{N-1}\times\{x_N+1\}$ to extend $h$ to $(\overline{B'_R}\setminus B'_r)\times[x_N-1, x_N+3]$, and translation, for $y\in \partial B'\times(x_N, x_N+1)$ we obtain
\begin{equation*}
\begin{aligned}
h(y)&=H^{T_x+(0',y_N-x_N)}_h(y)\\&=\mu_x^{T_x}\left(\partial B'_r\times [x_N-1, 2x_N+1-y_N]\right)-\mu_x^{T_x}\left(\partial B'_r\times [2x_N+1-y_N, x_N+1]\right)\\&\leq \mu_x^{T_x}(F_x)=h(x).
\end{aligned}
\end{equation*}
By symmetry, $h(y)\leq h(x)$ for $y\in \partial B'\times(x_N-1,x_N+1)$.
Since $$h(y)=0\leq h(x) \mbox{ for }y\in [\partial B'_R\times (x_N-1,x_N+1)]\cup[(B'_R\setminus \overline{B'})\times\{x_N-1,x_N+1\}],$$ using the maximum principle, we see that $h\leq h(x)$ on $(B'_R\setminus \overline{B'})\times(x_N-1,x_N+1)$, which proves \eqref{2.2}.
\end{proof}

We note that Lemma \ref{beta1} holds in a more general context when $E''$ is a closed subset of $\RN\setminus U$.

To prove Theorem \ref{big} we shall need the following estimate.
\begin{lem} \label{beta2}Let $\nu>1$. Assume that $(b_n)$ satisfies
\begin{equation}\label{ratio}\frac{1}{\nu}\leq \frac{b_{k+1}-b_k}{b_{j+1}-b_j}\leq \nu
\end{equation}
whenever $|b_k-b_j|<4$.
Then there exists a constant $c_2$, depending only on $N$, $r$ and $\nu$, such that
$$\mu_x^{T_x\setminus E''}(F_x)\leq c_2(b_{n+1}-b_n)$$
whenever $x\in \partial B'\times(b_n,b_{n+1})$ and $n\in \mathbb{N}$.
\end{lem}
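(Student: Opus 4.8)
The plan is to bound the harmonic measure $\mu_x^{T_x\setminus E''}(F_x)$ by comparing it with the harmonic measure of a simpler, more homogeneous obstacle configuration. First I would observe that, by Lemma \ref{beta1}, it suffices to estimate $\mu_x^{T_x\setminus E''}(\partial T_x)$, i.e.\ the probability (from $x$) of exiting the slab $T_x$ without first hitting $E''$. The set $E''\cap T_x$ consists of finitely many "shelves" $(\RR^{N-1}\setminus B')\times\{b_m\}$ with $b_m\in(x_N-1,x_N+1)$, and by the ratio hypothesis \eqref{ratio}, since all these $b_m$ lie within distance $2<4$ of each other, the gaps $b_{m+1}-b_m$ are all comparable (within a factor $\nu^2$, say) to $\rho_n=(b_{n+1}-b_n)/2$, hence the number of shelves crossing $T_x$ is $O(1/(b_{n+1}-b_n))$ with constant depending on $\nu$.

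Next I would set up the comparison. Fix $x\in\partial B'\times(b_n,b_{n+1})$ and let $\delta=b_{n+1}-b_n$. The key is a one-step estimate: starting from a point on $\partial B'$ at height between two consecutive shelves a distance $\approx\delta$ apart, the $\mu^{T_x\setminus E''}$-probability of travelling "vertically" past the next shelf (either up or down) before exiting through $\partial B'_R\times(x_N-1,x_N+1)$ or $F_x$ is at most $1-c(\delta)$ for an explicit $c(\delta)\to 1$ — more precisely, I expect to show this probability is $\le 1/(1+c\delta)$ for a constant $c=c(N,r,\nu)>0$, by a barrier/subharmonic-function argument in the annular region $A(r,R)\cap\{$one gap$\}$ with the shelf removed. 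Chaining this over the $\gtrsim 1/\delta$ shelves on one side, the probability of reaching $F_x$ without exiting first is at most $(1+c\delta)^{-k}$ with $k\gtrsim 1/\delta$, which is bounded by a constant; to squeeze out the factor $\delta$ itself, one uses that to reach $F_x$ the Brownian motion must in particular survive the \emph{first} shelf adjacent to $x$, contributing one factor $1-c'\delta$, and then combines this with the boundedness just obtained, to get $\mu_x^{T_x\setminus E''}(F_x)\le c_2\,\delta$.

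An alternative, cleaner route that I would actually pursue first: compare $T_x\setminus E''$ with the domain $A(r,R)\setminus E''$ (dropping the horizontal caps at heights $x_N\pm 1$), where by the Poisson kernel estimate following \eqref{pinterior} — applied in the annulus — one has good exponential control. Here the removed shelves turn the annulus into a domain for which $h_+$-type functions $e^{\pm\beta x_N}\psi(x')$ are available as comparison functions on each gap; gluing these across the shelves (using that $\psi$ vanishes on $\partial B'$ and on each shelf) produces a positive superharmonic function on $A(r,R)\setminus E''$ that dominates $\chi_{F_x}$ and whose value at $x$ is $O(\delta)$. The main obstacle is the construction of this global comparison function across the obstacles: one must check that the pieces fit together super-harmonically at each shelf $b_m$ (an easy one-sided derivative inequality, since the obstacle only \emph{increases} the relevant eigenvalue and the function picks up a convexity kink of the right sign), and that the resulting bound at $x$ genuinely carries the extra factor $\delta=b_{n+1}-b_n$ rather than merely being $O(1)$ — this is where the hypothesis \eqref{ratio}, guaranteeing uniform comparability of all nearby gaps, is essential, since it lets one use a single exponential rate $\beta\asymp 1/\delta$ throughout the slab and makes the telescoping product collapse to the claimed linear bound.
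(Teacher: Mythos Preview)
Your chaining idea misreads the geometry, and this is where the factor $b_{n+1}-b_n$ is lost. The shelves $E''\cap T_x$ lie entirely in $\{|y'|\ge 1\}$, while $F_x\subset\{|y'|=r\}$ and the whole of $U\cap T_x$ is obstacle-free. So Brownian motion starting at $x\in\partial B'$ can reach $F_x$ without ever crossing a single shelf level outside $U$: it simply drifts inward. The quantity you propose to iterate --- the probability of ``passing the next shelf'' --- therefore does not control $\mu_x^{T_x\setminus E''}(F_x)$ at all, and the product $(1+c\delta)^{-k}$ with $k\asymp 1/\delta$ gives at best an $O(1)$ bound. Your attempted repair, extracting ``one factor $1-c'\delta$'' from the first shelf, does not help either: $1-c'\delta$ is close to $1$, not to $\delta$, and if you meant that surviving the first shelf already costs a factor $c'\delta$, that is precisely the statement of the lemma and cannot be assumed. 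The second route (a glued eigenfunction barrier $e^{\pm\beta x_N}\psi(x')$) is not workable as described: there is no cross-section on which a single $\psi$ can vanish both on $\partial B'$ and on the shelves (the shelves live at discrete heights, not on a cylinder), and on the inner annulus $B'\setminus\overline{B'_r}$ the gaps are all connected, so there is no product structure to exploit.

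The paper's argument is a one-shot contraction on the supremum, not an iteration over shelves. One enlarges $T_x$ slightly to a box $\omega$ whose top and bottom lie on shelves, sets $g=H^{\omega\setminus E''}_{\chi_{V(r)}}$ and $m=\sup_{\partial U}g$, and proves $m\le C_1\rho_n + C_2 m$ with $C_2<1$. The two ingredients are: (i) on a sawtooth surface $\partial Z\cap U$ lying at distance $\asymp\rho_n$ inside $U$, the radial barrier $f_N(y)\asymp 1-|y'|$ bounds the ``direct'' contribution $g_1=H^{\omega\cap U}_{\chi_{V(r)}}$ by $C_1\rho_n$; (ii) the ``feedback'' contribution $g_2=\int_{\partial U\cap\omega}g\,d\mu_y^{\omega\cap U}$ is bounded by $C_2 m$ with $C_2<1$, because near each shelf edge $\partial B'\times\{b_k\}$ regularity forces $g\le m/2$, and the ratio hypothesis~\eqref{ratio} lets one connect all of $\partial Z\cap U$ to these good neighbourhoods by Harnack chains of length depending only on $N$ and $\nu$. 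The linear factor $\rho_n$ thus comes from a single application of the radial barrier on the sawtooth, not from any product over shelves.
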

\begin{proof} We suppose that $x\in \partial B'\times(b_n,b_{n+1})$ for some $n\in \mathbb{N}$. We define $\omega=(B'_R\setminus \overline{B'_r})\times (b_{j_0}, b_{k_0})$, where $j_0=\max\{j:b_j\leq x_N-1\}$ and $k_0=\min\{j: b_j\geq x_N+1\}$. Let $g=H^{\omega\setminus E''}_{\chi_{V(r)}}$ on $\omega\setminus E''$ and define $g=\chi_{V(r)}$ elsewhere. Let $m=\sup_{\pu}g$. We note that
$$\mu^{T_x\setminus E''}_x(F_x)\leq \mu^{\omega\setminus E''}_x({V(r)})\leq m.$$
We will obtain an upper bound for $m$ in terms of $\rho_n$.  To do this,  we define an open set $Z$ as follows
$$Z=\omega\setminus \bigcup_{k=0}^{\infty}\bigcup_{p\in[b_k,b_{k+1}]}\{z\in \overline{B'_s}\times \{p\}: s=(1-r)(|p-(b_k+\rho_k)|-\rho_k)+1\}.$$
We estimate $g$ on $\partial Z$ in terms of $m$ and $\rho_n$. Since $g=0$ on $\partial Z\setminus U$, we  estimate $g$ on $\partial Z\cap U$, noting that, for $y\in \omega\cap U$, we have
\begin{equation}\label{gfir} g(y)=H^{\omega\cap U}_g(y)=H^{\omega\cap U}_{\chi_{V(r)}}(y)+\int_{\pu\cap\omega} g d\mu^{\omega\cap U}_y.
\end{equation}
Let $g_1(y)=H^{\omega\cap U}_{\chi_{V(r)}}(y)$ and $g_2(y)=\int_{\pu\cap\omega} g d\mu^{\omega\cap U}_y$ for $y\in \omega\cap U$.  Using the function
\begin{equation*}f_N(y)=\left\{
\begin{array}{ll}
|y'|^{3-N}-1& (N\geq 4)\\
-\log|y'|& (N=3)\\
1-|y'|& (N=2)
\end{array}\right.
\end{equation*}
and the maximum principle, we find that for $y\in \partial Z\cap U$
\begin{equation}\label{noweg1} g_1(y)\leq f_N(y)/f_N(rx) \leq C_1(N,r)(1-|y'|)\leq  C_1(N,r,\nu)\rho_n.
\end{equation}
We now wish to show that there exists a constant $C_2(N,\nu)\in (0,1)$ such that
\begin{equation}\label{g22}g_2\leq C_2(N,\nu)m \ \ \mbox{on}\ \ \partial Z\cap U.
\end{equation}

Let $l=(1-r)\min_{j_0\leq k\leq k_0-1}\rho_k$ and let $t=(1,0,...,0,t_N)$ with $t_N\in \{b_k: k=j_0+1,...,k_0-1\}$.
By  \cite[Lemma 8.5.1]{armgar}, for $x\in B_{l/2}(1+l,0,...,0,t_N)$ we have $g(x)\leq C(N)(g(p_+)+g(p_-))$, where $p_\pm=(1+l,0,...,0,t_N\pm l/2)$. Using a Harnack chain to cover the longer arc joining $p_+$ and $p_-$ along the circle $\partial B_{\sqrt{5}l/2}(t)\cap (\RR\times\{0\}^{N-2}\times \RR)$, we deduce that $g\leq C_3(N)m$ on that circle. By the invariance of  $g$ under rotations around the $x_N$-axis and the maximum principle, this inequality holds on a torus-shaped set enclosing the edge of $(\RR^{N-1}\setminus B')\times\{t_N\}$; more precisely on every closed ball centred at a point of $\partial B'\times\{t_N\}$ and having radius $\sqrt{5}l/2$. When $t_N=b_{j_0}$ or $t_N=b_{k_0}$, this inequality follows directly from  \cite[Lemma 8.5.1]{armgar}, with a perhaps different constant, $C_4(N)$ say.
In particular, for  $y\in B^t\setminus E^t$, where $B^t=B_{\sqrt{5}l/2}(t)$, $E^t=[1,+\infty)\times\RR^{N-2}\times\{t_N\}$ and $t_N\in\{b_k: k=j_0,...,k_0\}$, we have

\begin{equation}\label{gsub}
g(y)\leq H^{B^t\setminus E^t}_g(y)=\int_{\partial B^t}gd\mu^{B^t\setminus E^t}_y\leq \max\{C_3(N), C_4(N)\}m H^{B^t\setminus E^t}_{\chi_{\partial B^t}}(y).
\end{equation}
Since $t$ is a regular boundary point for $B^t\setminus E^t$, there exists $\delta=\delta(N)>0$ such that
\begin{equation}\label{Hest}H^{B^t\setminus E^t}_{\chi_{\partial B^t}}(y)\leq \frac{1}{2\max\{C_3(N), C_4(N)\}}\ \ \ \ (y\in B_{\delta l}(t)\setminus E^t).
\end{equation}
Let $K_{\delta l}=\bigcup_{k=j_0}^{k_0}\{y\in\pu: |y_N-b_k|<\delta l\}$. In view of \eqref{gsub} and \eqref{Hest}, and the invariance of $g$ under rotations around the $x_N$-axis, we conclude that $g\leq m/2$ on $K_{\delta l}$.

Hence, for $y\in \partial Z\cap U$, we have
\begin{equation}\label{g2}
\begin{aligned}
g_2(y)\leq \int_{\pu} g d\mu^U_y\leq \frac{m}{2}\mu^U_y(K_{\delta l})+m \mu^U_y(\pu\setminus K_{\delta l})\leq m\left(1-\frac{1}{2}\mu^U_y(K_{\delta l})\right).
\end{aligned}
\end{equation}
We now show that there exists a  constant $C_5(N,\nu)\in (0,1)$ such that $\mu^U_y(K_{\delta l})\geq C_5(N,\nu)$ for $y\in \partial Z\cap U$. We first estimate $\mu^U_y(K_{\delta l})$ on some ball centred at $t$ and then join other points of $\partial Z\cap U$ by a Harnack chain.

Let $W_{\delta l}=B'\times (t_N-\delta l,t_N+\delta l)$.
 We use a dilation $\psi(y)=t+(y-t)/(\delta l)$
and note that, by continuity, there exists an absolute positive constant $\gamma$ such that for $y\in \psi(W_{\delta l})\cap B_\gamma(t)$ the following inequalities hold
\begin{equation*}H^{W_{\delta l}}_{\chi_{K_{\delta l}}}(\psi^{-1}(y))=H^{\psi(W_{\delta l})}_{\chi_{\psi(K_{\delta l})}}(y)\geq H^{(-\infty,1)\times\RR^{N-2}\times(t_N-1,t_N+1)}_{\chi_{\{1\}\times\RR^{N-2}\times[t_N-1,t_N+1]}}(y)\geq 1/2.\end{equation*}
Hence
\begin{equation*}\mu^U_y(K_{\delta l})\geq\mu^{W_{\delta l}}_y(K_{\delta l})\geq 1/2 \ \ \ (y\in B_{\gamma \delta l}(t)\cap U),\end{equation*}
and by Harnack's inequalities
$$\mu^U_y(K_{\delta l})\geq C_5(N,\nu)\ \ \ \mbox{for all} \ \ y\in \partial Z\cap U.$$
Let $C_2(N,\nu)=1-C_5(N,\nu)/2$. Then \eqref{g22} holds in view of \eqref{g2}, and by \eqref{gfir} and \eqref{noweg1} we have
$$g\leq C_1(N,r,\nu)\rho_n+C_2(N,\nu)m \ \ \mbox{on}\ \ \partial Z.$$
By the maximum principle this inequality holds on $Z$ and implies that
\begin{equation*}m\leq \frac{C_1(N,r,\nu)}{1-C_2(N,\nu)} \rho_n.\end{equation*}
This finishes the proof of lemma.
\end{proof}
\medskip

We define $\beta_{E'}(x)$ to be the harmonic measure of $\partial
T_x$  in $T_x\backslash E'$ evaluated at $x$. If $x\in E'$, then
$\beta_{E'}(x)$ is interpreted as $0$.
We observe that, if $(b_n)$ satisfies the ratio condition \eqref{ratio}, then,  in view of Lemmas \ref{beta1} and \ref{beta2},  we have
\begin{align}
\int_{\partial B'\times(b_1,+\infty)}\beta_{E'}(y)d\sigma (y)
&\leq c_1 \int_{\partial B'\times(b_1,+\infty)}\mu^{T_y\setminus E''}_y(F_y) d\sigma (y)\nonumber\\
&\leq c_1 c_2 \sigma_{N-1} \sum_{n=1}^{\infty}(b_{n+1}-b_n)^2,\label{gamlam}
\end{align}
where $\sigma_{N-1}$ denotes the surface measure of $\partial B'$ in $\RR^{N-1}$.

 Henceforth let $(b_n)$ satisfy \eqref{ratio}  and let $$\Lambda=\sum_{n=1}^{\infty}(b_{n+1}-b_n)^2<+\infty.$$
Before we prove the next lemma, we collect together some facts about certain Bessel functions (see \cite[Section 4]{armfug}). Let $K=K_{(N-3)/2}:(0,\infty)\rightarrow (0,\infty)$ denote the Bessel function of the third kind, of order $(N-3)/2$. Then  the function \begin{equation}\label{h0} h_0(x',x_N)=|x'|^{(3-N)/2}K(\pi |x'|)\sin(\pi x_N)
\end{equation}
 is positive and superharmonic on the strip $\RR^{N-1}\times(0,1)$, harmonic on $(\RR^{N-1}\setminus\{0'\})\times (0,1)$ and  vanishes on $\RR^{N-1}\times \{0,1\}\setminus \{(0',0), (0',1)\}$. Moreover, there exists $c(N)\geq 1$ such that
 \begin{equation}\label{Bessel} c(N)^{-1}\leq (2t/\pi)^{1/2}e^tK(t)\leq c(N)\ \ \mbox{for} \ t\in [1, +\infty).
 \end{equation}
We also recall a result of Domar (\cite[Theorem 2]{domar}).  Suppose that $D$ is a domain
in $\RN$ and $F:D\rightarrow [0,+\infty]$ is a given upper
semicontinuous function on $D$. Let
$\mathcal{F}$ be the collection of all subharmonic functions $u$, such that $u\leq
F$ on $D$. Domar's result says that if
\begin{equation}\label{domar}
\int_D [\log^+ F (x)]^{N-1+\varepsilon} dx < \infty,
\end{equation}
for some $\varepsilon>0$, then the function $M (x)= \sup_{u\in
\mathcal{F}}u(x)$ is bounded on every compact subset of $D$.

Let $0<r'<\min\{r,1/2\}$. Define $V=A(r',\infty)\setminus E'$ and $U_n=(\RR^{N-1}\setminus \overline{B'})\times (b_n,b_{n+1})$ for $n\in \mathbb{N}$.

\begin{lem} \label{ins}  There exists a positive constant $c_3$, depending on $N, R, r$ and $r'$, such that, for any positive harmonic function $u$ on $V$ that is bounded on each $U_n$ and vanishes on $E'$,
$$u(y)\leq c_3 u(rx',x_N)H^{T_x\setminus E'}_{\chi_{\partial T_x}}(y) \ \ \ (x\in \pu, y\in T_x\setminus E').$$
In particular, $$u(x',x_N)\leq c_3 \beta_{E'}(x) u(rx',x_N)\ \ \ (x\in \pu).$$
\end{lem}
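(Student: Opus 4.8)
The plan is to reduce the inequality to an estimate on $\partial T_x$, and then to split $\partial T_x$ into the part away from the teeth of $E''$ — where a chain of Harnack balls through the ``core'' $\{|x'|<1\}$ shows that $u$ is comparable with $u(rx',x_N)$ — and the part near the teeth, where the Bessel function $h_0$ of \eqref{h0} forces exponential smallness. The ``in particular'' clause is immediate: $x=(x',x_N)\in T_x$ since $r<1<R$, $H^{T_x\setminus E'}_{\chi_{\partial T_x}}(x)=\mu^{T_x\setminus E'}_x(\partial T_x)=\beta_{E'}(x)$, and both sides vanish when $x\in E'$; so it follows from the main inequality by taking $y=x$.

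Fix $x\in\pu$ and put $G:=T_x\setminus E'$, a bounded domain contained in $V$. Then $u$ is positive, harmonic and bounded on $G$ — it tends to $0$ at each point of $E'\cap\overline{T_x}$, and $\overline{T_x}$ meets only finitely many $U_n$ — and it extends continuously to $\partial T_x\setminus E'$ since there $G$ is interior to $V$. Hence $u=H^{G}_{u\chi_{\partial T_x}}$, so
\[
u(y)=\int_{\partial T_x}u\,d\mu^{G}_y\qquad(y\in G),
\]
and it is enough to bound this by $c_3\,u(rx',x_N)\,\mu^{G}_y(\partial T_x)=c_3\,u(rx',x_N)\,H^{T_x\setminus E'}_{\chi_{\partial T_x}}(y)$. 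Fix a small $\varepsilon_0=\varepsilon_0(N,R)\in(0,R-1)$ and a corresponding $\delta_0=\delta_0(N,r,R)>0$, and write $\partial T_x=P_0\cup\bigcup_m P_m$, where $P_0$ is the set of points of $\partial T_x$ at distance $\geq\delta_0$ from $E''$ and $P_m=\partial T_x\cap\overline{U_m}$ for the finitely many $m$ with $\partial T_x\cap\overline{U_m}\neq\emptyset$. Any point of $P_0$ can be joined to $(rx',x_N)$ by a Harnack chain of length bounded in terms of $N,r,r',R$, lying in a fixed subdomain of $V$ obtained from $\{r'<|x'|<R\}\times(x_N-2,x_N+2)$ by deleting the $(\delta_0/2)$-neighbourhood of $E''$ (which misses the core); so $u\leq C_1\,u(rx',x_N)$ on $P_0$, and $\int_{P_0}u\,d\mu^{G}_y\leq C_1\,u(rx',x_N)\,H^{T_x\setminus E'}_{\chi_{\partial T_x}}(y)$.

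Now take a relevant $m$. Since the distance to $E'$ grows geometrically as one moves inward, a Harnack chain through the core to a point of the inner cylinder $\partial B'\times(b_m,b_{m+1})$ has length only $O(\log(1/\rho_m))$; combined with the fact that at the two circular edges $\partial B'\times\{b_m,b_{m+1}\}$ of the tooth $u$ vanishes at the slit rate $\operatorname{dist}^{1/2}$ (this is where boundedness of $u$ on $U_m$ is used), this gives $u\leq C\rho_m^{-\kappa(N)}u(rx',x_N)=:M_m$ on $U_m$. As $u$ also vanishes on the two flat faces of $U_m$, comparing $u$ with a multiple of $h_0\big(x'/(2\rho_m),(x_N-b_m)/(2\rho_m)\big)$ on the shrunk slab $(\RR^{N-1}\setminus\overline{B'_{1+\varepsilon_0}})\times(b_m,b_{m+1})$ — using \eqref{Bessel} for the exact order $e^{-\pi|x'|/(2\rho_m)}$ of this rescaled superharmonic function, and that at radius $1+\varepsilon_0$, in the ``bulk'' of the tooth, $u$ and $h_0$ vanish at the same linear rate at the corners — shows that $u$ is exponentially small in $1/\rho_m$ on the part of $P_m$ that lies far enough out radially. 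On the rest of $P_m$ one has $u\leq M_m$; but the rest of $P_m$ lies behind the corridor $U_m$ of transverse width $2\rho_m$, so $\mu^{G}_y(P_m)$ is itself exponentially small in $1/\rho_m$, by a Carleman-type comparison of $z\mapsto\mu^{G}_z(P_m)$ with a translate of the same rescaled $h_0$. Balancing these exponentials against $\rho_m^{-\kappa}$ and summing over the finitely many $m$ (using that $\sum_m\rho_m$ is bounded, since the relevant $b_m$ lie in an interval of length $\leq 4$) gives $\sum_m\int_{P_m}u\,d\mu^{G}_y\leq C_2\,u(rx',x_N)\,H^{T_x\setminus E'}_{\chi_{\partial T_x}}(y)$; take $c_3=C_1+C_2$.

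The main obstacle is this last balancing, carried out uniformly in $x$ and $y$: the exponential smallness coming from $h_0$ — governed by the first transverse eigenvalue $\pi^2/(2\rho_m)^2$ — must be exploited simultaneously in the bound for $u$ on $P_m$ and in the bound for $\mu^{G}_y(P_m)$, so that together they absorb the polynomially large Harnack factor $\rho_m^{-\kappa}$. Special care is needed at the corners where a tooth (at a height $b_m\in(x_N-1,x_N+1)$), one of the caps $(B'_R\setminus\overline{B'_r})\times\{x_N\pm1\}$, and $\partial T_x$ all come together: there $u$ need not be small, so the harmonic-measure estimate has to be localised to exactly that part of $P_m$, and one must verify that that part carries correspondingly little harmonic measure.
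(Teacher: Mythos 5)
There is a genuine gap, and it sits exactly at the point you flag as the ``main obstacle'': the balancing of $\rho_m^{-\kappa}$ against harmonic-measure smallness cannot be carried out uniformly in $y$, because the smallness claim is false for the $y$ that matter. Consider the corridor $U_m$ containing the cap height $x_N+1$ and a point $z_0=(z',x_N+1)$ of the cap with $1<|z'|\leq 1+\rho_m$, $z_0\notin E'$. This $z_0$ lies in your ``rest of $P_m$'' (the part where you only have $u\leq M_m=C\rho_m^{-\kappa}u(rx',x_N)$: the Harnack chain through the core has length of order $\log(1/\rho_m)$, the slit estimate does not improve on this, and the Bessel comparison with \eqref{h0}, \eqref{Bessel} only produces decay once $|y'|-1\gtrsim\rho_m\log(1/\rho_m)$). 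Since $z_0$ is a regular boundary point of $G=T_x\setminus E'$ lying in the relative interior of $\partial T_x$, as $y\to z_0$ within $G$ both $\mu^G_y(\hbox{rest of }P_m)$ and $\mu^G_y(\partial T_x)=H^{T_x\setminus E'}_{\chi_{\partial T_x}}(y)$ tend to $1$; no ``Carleman-type'' estimate can make the former exponentially small in $1/\rho_m$, and the inequality you need, $\rho_m^{-\kappa}\,\mu^G_y(\hbox{rest of }P_m)\leq C\,\mu^G_y(\partial T_x)$, fails. Indeed, evaluated at such $y$ the statement of the lemma itself reduces to the uniform bound $u(z_0)\leq c_3\,u(rx',x_N)$, so any proof must produce an $O(1)$ bound on $u$ in the band $1<|y'|\lesssim 1+\rho_m\log(1/\rho_m)$ near the corners, and nothing in your argument yields better than $\rho_m^{-\kappa}u(rx',x_N)$ there.

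This missing uniform bound is precisely what the paper's proof is organized around: it establishes the pointwise estimate $u(y)\leq C\,u(rx',x_N)\,\bigl|1-|y'|\bigr|^{1-N}$ on a slab $A_x$ (your Steps through the core, the slit lemma \cite[Lemma 8.5.1]{armgar}, and the Bessel function $h_0$ all reappear there), and then, because $\int_{A_x}(\log^+ F)^N\,dy\leq C(N,R)$, invokes Domar's theorem \eqref{domar} to upgrade the blow-up along $|y'|=1$ to the uniform bound $u\leq C\,u(rx',x_N)$ on $\overline{T_x}$; after that the harmonic-measure inequality is immediate from $u=H^{T_x\setminus E'}_{u}$ and $u=0$ on $E'$, with no decomposition of $\partial T_x$ needed. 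Without Domar's theorem (or some substitute giving an $O(1)$ bound right up to the junction of $\partial B'$ with the teeth and the caps), your scheme cannot close. A minor further point: the boundedness of $u$ on each $U_n$ is used to apply the maximum principle on the unbounded slabs (infinity having zero harmonic measure there), not to obtain the $\mathrm{dist}^{1/2}$ rate at the slit edges, which is a local fact.
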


\begin{proof} Let $x\in\pu$, $l=(1+r')/3$ and $L=2R$. Define $A_{x}=\{y: l<|y'|<L, |x_N-y_N|<2\}$. We  will show that
\begin{equation}\label{F}\frac{u(y)}{C(N,r,r')u(rx',x_N)}\leq F(y)\ \ \ \ (y\in A_x),
\end{equation}
where
\begin{equation*}F(y)=\left\{
\begin{array}{ll}
|1-|y'||^{1-N},& |y'|\neq1\\
+\infty,& |y'|=1.
\end{array}\right.
\end{equation*}

\nt \textit{Step 1.} Let $(y',y_N)\in A_x\cap U$. Harnack's inequalities yield that
\begin{equation*}
\begin{aligned}
u(y)\leq C(N,r,r')u(rx',x_N)(1-|y'|)^{1-N}.
\end{aligned}
\end{equation*}

\nt \textit{Step 2.} If $y\in A_x\cap U_n$ and $|y'|-1\leq \min\{y_N-b_n, b_{n+1}-y_N\}$, then there is a Harnack chain of  fixed length joining $(y',y_N)$ with $((2-|y'|)y'/|y'|, y_N)\in A_x\cap U$. By Step 1, we have
$$u(y)\leq C(N)u((2-|y'|)y'/|y'|, y_N)\leq C(N,r,r')u(rx',x_N)(|y'|-1)^{1-N}.$$

\nt \textit{Step 3.}  If $y\in A_x\cap U_n$ and $\rho_n\geq  |y'|-1> \min\{y_N-b_n, b_{n+1}-y_N\}$, we apply  \cite[Lemma 8.5.1]{armgar} and Harnack's inequalities to see that
$$u(y)\leq C(N)u(y',\widetilde{y}_N),$$
where $\widetilde{y}_N$ is such that $|\widetilde{y}_N-y_N|<|b_n+\rho_n-y_N|$ and $|y'|-1= \min\{\widetilde{y}_N-b_n, b_{n+1}-\widetilde{y}_N\}$. By Step 2,
 $$u(y)\leq C(N,r,r')u(rx',x_N)(|y'|-1)^{1-N}.$$

\nt \textit{Step 4.} If $y\in A_x\cap U_n$ and $|y'|\geq 1+\rho_n$, let $V_n=\{(z',z_N): 1+\rho_n<|z'|, z_N\in (b_n,b_{n+1})\}$.
For $z\in U_n$ we define a function
$$h_n(z)=\frac{h_0((z',z_N-b_n)/(2\rho_n))}{K\left(\pi(1+\rho_n)/(2\rho_n)\right)}\left(\frac{1+\rho_n}{2\rho_n}\right)^{(N-3)/2}
$$
which is harmonic on $U_n$ and vanishes on $\partial U_n \setminus \pu$.
Applying  \cite[Lemma 8.5.1]{armgar} and Harnack's inequalities to $u$ and $h_n$, by Step 3, we get
$$u(z)\leq C(N,r,r') u(rx',x_N) \rho_n^{1-N}h_n(z)\ \ \ \mbox{for} \ \ z\in \partial V_n.
$$
Since $u$ is bounded on $V_n$ and $\infty$ has zero harmonic measure  for $V_n$,
\begin{equation}u(y)\leq C(N,r,r')u(rx',x_N)\rho_n^{1-N} h_n(y).\label{hn}\end{equation}
Furthermore, by \eqref{h0} and \eqref{Bessel}
 \begin{equation*}
\begin{aligned}h_n(y)&\leq \left(\frac{1+\rho_n}{|y'|}\right)^{\frac{N-3}{2}}K\left(\frac{\pi |y'|}{2\rho_n}\right)\left(K\left(\frac{\pi(1+\rho_n)}{2\rho_n}\right)\right)^{-1}\\&
\leq C(N)e^{-\frac{\pi}{2\rho_n}(|y'|-1)}\left(\frac{1+\rho_n}{|y'|}\right)^{\frac{N-2}{2}}\\
&\leq C(N)e^{-\frac{\pi}{2\rho_n}(|y'|-1)}\\
&\leq C(N)\left(\frac{|y'|-1}{\rho_n}\right)^{1-N}.
\end{aligned}
\end{equation*}
Hence we see from \eqref{hn} that
$$u(y)\leq C(N,r,r')u(rx',x_N)(|y'|-1)^{1-N}.$$

We conclude that \eqref{F} follows from Steps 1-4.
 Since
\begin{equation*}
\int_{A_x}(\log^+F(y))^Ndy\leq C(N,R),
\end{equation*}
 Domar's result  and Harnack's inequalities  yield (if $r<l$)
\begin{equation*}
u(y)\leq C(N,R,r,r')u(rx',x_N)\ \ \ (y\in \overline{T_x}).
\end{equation*}
Therefore
$$u(y)=H^{T_x\setminus E'}_u(y)\leq C(N,R,r,r')u(rx',x_N)H^{T_x\setminus E'}_{\chi_{\partial T_x}}(y) \ \ \ (y\in T_x\setminus E').$$
In particular,
$$u(x)\leq C(N,R,r,r')u(rx',x_N)\beta_{E'}(x).$$
\end{proof}

\begin{lem}\label{vV} Let $v:\RN\cup\{\infty\}\rightarrow [0,+\infty]$ be a Borel measurable function such that $v(x)\leq e^{\alpha x_N}\chi_{V(r')}(x)$ on $\RN$.
 There exist positive constants $c_4$ and $c_5$, depending on $N,R,r$ and $r'$, such that, if $\Lambda \leq c_4$, then $H_v^V$ exists and
$$H^V_v(x)\leq H^{A(r',1)}_v(x)+c_5 \Lambda  e^{\alpha x_N}\ \ \ (|x'|=r).$$
\end{lem}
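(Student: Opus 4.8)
The plan is to compare $H^V_v$ with the harmonic measure solution on the simpler domain $A(r',1)$ by controlling the contribution of the "teeth" $U_n$ that stick out beyond the cylinder $U$. First I would verify that $H^V_v$ exists: since $v$ is bounded by $e^{\alpha x_N}\chi_{V(r')}$, which is a continuous (in fact harmonic-minorant-admitting) function, resolutivity of $V$ for such data follows from the fact that $V=A(r',\infty)\setminus E'$ is a Greenian domain and $e^{\alpha x_N}$ is a majorant; more carefully one shows $\overline H^V_v\le e^{\alpha x_N}$ and $\underline H^V_{v}\ge 0$, and that the two agree because the relevant boundary (including $\infty$, which carries zero harmonic measure for each $U_n$ by the argument already used in Lemma \ref{ins}) is regular except for a polar set.

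The core estimate is obtained by splitting $V$ along the cylinder wall $V(1)=\partial U$. Write $w=H^V_v$. On $A(r',1)=V\cap U$, $w$ is harmonic with boundary data $v$ on $V(r')$ and boundary data $w$ itself on $V(1)$, so $w(x)=H^{A(r',1)}_v(x)+\int_{V(1)}w\,d\mu^{A(r',1)}_x$ for $|x'|<1$. Thus the whole problem reduces to estimating $w$ on $V(1)=\partial U$ and then propagating that bound inward via the Poisson kernel estimate \eqref{pinterior} for $A(r',1)$ (whose ``$\alpha$'' is the eigenvalue of the annulus, but Harnack lets us pass back to $e^{\alpha x_N}$ at $|x'|=r$, possibly adjusting $r'$). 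To bound $w$ on $\partial B'\times\{x_N\}$, observe that $w$ restricted to the teeth region satisfies the hypotheses of Lemma \ref{ins} applied to the translated/restricted domain: $w$ is a positive harmonic function on $V$, bounded on each $U_n$ (it is bounded above by $e^{\alpha x_N}$ there, which is bounded on the bounded-in-$x_N$ slab after we localise), and vanishes on $E'$. Hence by the ``in particular'' conclusion of Lemma \ref{ins},
\begin{equation*}
w(x',x_N)\le c_3\,\beta_{E'}(x)\,w(rx',x_N)\qquad(x\in\pu).
\end{equation*}
Since $w(rx',x_N)\le \overline H^V_v(rx',x_N)\le e^{\alpha x_N}\cdot C$ by the global majorant $e^{\alpha x_N}$ and Harnack (the value at $|x'|=r$ is comparable to the value at $|x'|=r'$ where the data lives, up to the $e^{\alpha\cdot}$ factor), we get $w\le C(N,R,r,r')\,\beta_{E'}(x)\,e^{\alpha x_N}$ on $\partial U$.

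Now I would feed this back into the splitting identity. For $|x'|=r$,
\begin{align*}
w(x)&=H^{A(r',1)}_v(x)+\int_{\partial B'\times\RR}w(y)\,d\mu^{A(r',1)}_x(y)\\
&\le H^{A(r',1)}_v(x)+C\int_{\partial B'\times\RR}\beta_{E'}(y)\,e^{\alpha y_N}\,P_{A(r',1)}(x,y)\,d\sigma(y),
\end{align*}
and by the Poisson-kernel bound \eqref{pinterior} for the annulus together with the fact that $\beta_{E'}(y)=0$ for $y_N\le b_1$, this last integral is at most
\begin{equation*}
C(N,R,r,r')\,e^{\alpha x_N}\int_{\partial B'\times(b_1,\infty)}\beta_{E'}(y)\,e^{-\alpha|x_N-y_N|}e^{\alpha(y_N-x_N)}\,d\sigma(y)\le C\,e^{\alpha x_N}\int_{\partial B'\times(b_1,\infty)}\beta_{E'}(y)\,d\sigma(y),
\end{equation*}
using $e^{-\alpha|x_N-y_N|}e^{\alpha(y_N-x_N)}\le 1$. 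By \eqref{gamlam} the remaining integral is $\le c_1c_2\sigma_{N-1}\Lambda$, giving the claimed $c_5\Lambda e^{\alpha x_N}$ term. Choosing $c_4$ small enough is not actually needed for the inequality itself but is harmless to state; it will matter in the subsequent iteration argument, so I retain the hypothesis $\Lambda\le c_4$.

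The main obstacle I anticipate is the bookkeeping around \emph{existence and resolutivity of $H^V_v$} and, relatedly, justifying that $w$ is genuinely bounded on each tooth $U_n$ so that Lemma \ref{ins} applies verbatim — one must know a priori that $\infty$ is negligible for $V$ and that the PWB solution does not ``leak'' growth through the unbounded teeth; the domination $w\le e^{\alpha x_N}$ handles the growth in $x_N$, but one still needs that on the \emph{horizontal} (large $|y'|$) ends of each $U_n$ the solution decays, which is exactly what the Bessel-function barrier $h_n$ from Lemma \ref{ins} provides. A secondary technical point is tracking how the annulus eigenvalue (call it $\alpha'$) versus $\alpha$ interact: since $r'<r$, one uses Harnack on the compact region $r'\le|x'|\le r$ to trade the sharp annulus Poisson decay for the bound phrased with $\alpha$ at $|x'|=r$, at the cost of constants depending on $r,r'$. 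Everything else is a routine assembly of the maximum principle, \eqref{pinterior}, Lemma \ref{ins}, and \eqref{gamlam}.
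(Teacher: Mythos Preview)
Your splitting identity and the use of Lemma \ref{ins} and \eqref{gamlam} are the right ingredients, but the argument has a genuine gap at the step where you assert a ``global majorant'' $w\le C e^{\alpha x_N}$. The function $x\mapsto e^{\alpha x_N}$ is \emph{subharmonic} on $\RN$ (its Laplacian is $\alpha^2 e^{\alpha x_N}>0$), so it is not in the upper class for $\overline H^V_v$ and gives no a priori control of $w$. Without such a bound you cannot (i) conclude that $H^V_v$ exists, (ii) verify that $w$ is bounded on each $U_n$ so that Lemma \ref{ins} applies, or (iii) estimate $w(rx',x_N)$ by $Ce^{\alpha x_N}$ before feeding it into the $\partial U$ integral. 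Your remark that ``choosing $c_4$ small enough is not actually needed for the inequality itself'' is therefore incorrect: the smallness of $\Lambda$ is exactly what closes this circularity.

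The paper resolves this by a truncation--bootstrap. One works with $h_n=H^V_{\min\{v,n\}}$, which is trivially bounded by $n$ (so Lemma \ref{ins} applies and existence is clear), and sets $m_n=\sup\{e^{-\alpha x_N}h_n(x): |x'|=r,\ x_N>-n\}$. Your splitting then yields, for $|x'|=r$, an inequality of the form $e^{-\alpha x_N}h_n(x)\le C_1 m_n\Lambda + C_2$, where the first term comes from the $\partial U$ piece (Lemma \ref{ins} gives $h_n(y)\le c_3\beta_{E'}(y)h_n(ry',y_N)\le c_3\beta_{E'}(y)m_n e^{\alpha y_N}$, and the annulus Poisson kernel with exponent $\alpha_{r'}>\alpha$ makes $\int e^{(\alpha-\alpha_{r'})|x_N-y_N|}\beta_{E'}\,d\sigma$ finite via \eqref{gamlam}) and the second from the $V(r')$ piece. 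Taking the supremum gives $m_n\le c(1+m_n\Lambda)$; if $\Lambda\le c_4:=(2c)^{-1}$ this yields $m_n\le 2c$ uniformly in $n$, and letting $n\to\infty$ produces both the existence of $H^V_v$ and the stated estimate. The hypothesis $\Lambda\le c_4$ is thus essential to the lemma, not merely convenient for later use.
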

\begin{proof} Let $h_n=H^V_{\min\{v, n\}}$ on $V$ and
$h_n=\min\{v, n\}$ on $\partial^{\infty}V$, and let
\begin{equation*}
m_n=\sup\{e^{-\alpha x_N}h_n(x',x_N): |x'|=r, \ x_N>-n\}.
\end{equation*}
Then
\begin{equation}\label{change}
h_n=H^{A(r',1)}_{h_n}=H_{h_n\chi_{\pu}}^{A(r',1)}+H_{h_n\chi_{V(r')}}^{A(r',1)} \ \ \mbox{in}\ \ A(r',1).
\end{equation}  Let
$\alpha_{r'}>0$ denote the square root of the first eigenvalue of
$-\Delta$ in $B'\setminus\overline{B'_{r'}}$. Then $\alpha<\alpha_{r'}$ because the complement  of  $B'\setminus\overline{B'_{r'}}$ in $B'$ is non-polar (see \cite[Section 1.3.2]{hen}). Since $d\mu_x^{A(r',1)}=P_{A(r',1)}(x,\cdot)d\sigma$ on $\pu$, the Poisson kernel estimates yield, for $|x'|=r$,  that
\begin{equation*}
\begin{aligned}
e^{-\alpha x_N}H_{h_n\chi_{\pu}}^{A(r',1)}(x)&\leq C(N,r,r') e^{-\alpha x_N}\int_{\partial U}h_n(y)e^{-\alpha_{r'} |x_N-y_N|}d\sigma(y)\\
&\leq C(N,r,r') \int_{\partial U}h_n(y)e^{-\alpha y_N}d\sigma(y).
\end{aligned}
\end{equation*}
Noting that $h_n$ satisfies the hypotheses of Lemma \ref{ins}, we see from \eqref{gamlam} that, when $|x'|=r$ we have
\begin{equation}\label{gmn}
\begin{aligned}
e^{-\alpha x_N}H_{h_n\chi_{\pu}}^{A(r',1)}(x)\leq C(N,R,r,r') \int_{\partial U}e^{-\alpha y_N}h_n(ry',y_N)\beta_{E'}(y)d\sigma(y)\leq C_1 m_n \Lambda,
\end{aligned}
\end{equation}
where $C_1$ is a constant depending on $N,R,r,r'$ and $\nu$.

Moreover, for $|x'|=r$ we have
\begin{align}
e^{-\alpha x_N}H_{h_n\chi_{V(r')}}^{A(r',1)}(x)&\leq e^{-\alpha x_N}\int_{V(r')}e^{\alpha y_N}d\mu^{A(r',1)}_x(y)\nonumber\\
&\leq  C(N,r,r') \int_{V(r')}e^{\alpha (y_N-x_N)}e^{-\alpha_{r'} |y_N-x_N|}d\sigma(y)\nonumber\\
&\leq C(N,r,r')\int_{-\infty}^{+\infty}e^{(\alpha-\alpha_{r'}) |y_N-x_N|}dy_N\leq C_2(N,r,r').\label{term2}
\end{align}
By \eqref{change}-\eqref{term2} we obtain
\begin{equation*}
\begin{aligned}
e^{-\alpha x_N}h_n(x)  =e^{-\alpha x_N}H_{h_n\chi_{\pu}}^{A(r',1)}(x)+e^{-\alpha x_N}H_{h_n\chi_{V(r')}}^{A(r',1)}(x)\leq C_1 m_n \Lambda + C_2\ \ \ (|x'|=r).
\end{aligned}
\end{equation*}
Taking $c=\max\{C_1,C_2\}$ we arrive at
$$m_n\leq c(1+m_n\Lambda).$$ We choose $c_4=(2c)^{-1}$ and suppose that $\Lambda\leq c_4$. Then
$$m_n\leq c+m_n c c_4= c+m_n/2,$$
which implies that $m_n\leq 2c$.

It follows from \eqref{change} and \eqref{gmn}  that for $|x'|=r$ we have
\begin{equation}\label{final}e^{-\alpha x_N}h_n(x)\leq 2c^2\Lambda+e^{-\alpha x_N}H_{h_n\chi_{V(r')}}^{A(r',1)}(x).
\end{equation}
We choose $c_5=2c^2$ and let $n\rightarrow\infty$. By \eqref{term2} the limit of the latter term on the right hand side of \eqref{final} is finite and so $H^V_v$ exists and satisfies
$$H^V_v(x)\leq c_5 \Lambda  e^{\alpha x_N}+ H^{A(r',1)}_v(x)\ \ \ (|x'|=r).$$\end{proof}

\begin{lem}\label{Hw} Let $w:\partial^{\infty}U\rightarrow [0, +\infty)$ be a Borel measurable function such that
\begin{equation}\label{ww}
w(y)\leq \beta_{E'}(y)e^{\alpha y_N} \ \ \ (y\in\partial U)\ \  \mbox{ and } \ \ w(\infty)=0.
\end{equation} Then, there exists a positive constant $c_6$, depending on
$N,R,r$ and $\nu$, such that
$$H_w^U(x',x_N)\leq c_6
e^{\alpha x_N} \Lambda \ \ \ \ \ (|x'|=r).$$
\end{lem}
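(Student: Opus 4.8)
The plan is to reduce the estimate for $H_w^U$ to the estimate already obtained in Lemma \ref{vV}. The obstacle is that Lemma \ref{vV} controls $H_v^V$ for data $v$ supported on $V(r')$, whereas here the data $w$ lives on $\partial U = V(1)$. So the first move is to transfer $w$ from $\partial U$ to the inner cylinder $V(r')$ in a way that dominates it harmonically, and then apply Lemma \ref{vV}.

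First I would define a Borel function $v$ on $V(r')$ that majorizes, in the harmonic-measure sense, the contribution of $w$. Concretely, since on $\partial U$ we have $w(y) \le \beta_{E'}(y) e^{\alpha y_N}$, and since every positive harmonic function on $U$ vanishing on $\partial U$ is controlled on $V(r')$ via Harnack, the natural candidate is something like $v(z) = c\, e^{\alpha z_N} \chi_{V(r')}(z)$ for a suitable constant $c = c(N,r,r')$, chosen so that $H_w^U \le H_v^V$ on $U \cap V \supseteq \{|x'| \ge r'\}$. To justify the comparison $H_w^U \le H_v^V$ I would use the maximum principle on the smaller domain: on $V = A(r',\infty)\setminus E'$ the function $H_v^V$ is a nonnegative superharmonic (indeed harmonic) minorant candidate; on $\partial U \cap V$, i.e.\ on $V(1)$, Harnack's inequalities applied inside the annulus $A(r', 1)$ together with the fact that $H_v^V$ vanishes on $E'$ give $H_v^V(y) \ge C^{-1} H_v^V(r y', y_N)$, and here one wants $H_v^V$ on $V(r')$ to reproduce $e^{\alpha z_N}$ up to constants, so that $H_v^V(y) \gtrsim \beta_{E'}(y) e^{\alpha y_N} \ge w(y)$ on $\partial U$ by Lemma \ref{ins} read in reverse. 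Once $H_w^U \le H_v^V$ on $U$, in particular on $V(r)$, Lemma \ref{vV} yields, for $|x'| = r$,
\begin{equation*}
H_w^U(x) \le H_v^V(x) \le H_v^{A(r',1)}(x) + c_5 \Lambda e^{\alpha x_N},
\end{equation*}
provided $\Lambda \le c_4$; and the term $H_v^{A(r',1)}(x)$ is bounded by $C(N,r,r') e^{\alpha x_N}$ by the Poisson kernel estimate following \eqref{pinterior}, exactly as in \eqref{term2}. Adding the two pieces gives the claimed bound with $c_6 = c_6(N,R,r,\nu)$ — but this only works when $\Lambda \le c_4$, which is unsatisfactory.

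To remove the smallness assumption on $\Lambda$, I would handle the case $\Lambda > c_4$ crudely: then $\Lambda$ is bounded below by a positive constant, while $H_w^U(x) \le H_{\beta_{E'}(y)e^{\alpha y_N}}^U(x)$, and since $\beta_{E'} \le 1$, a direct Poisson-kernel estimate as in \eqref{pinterior} gives $H_w^U(x) \le C(N,r) e^{\alpha x_N}$ for $|x'| = r$; absorbing the constant into $c_6$ (using $\Lambda \ge c_4$) finishes this range. Alternatively, and more cleanly, one may simply note that the inequality is only needed with $\Lambda$ small later in the paper, but presenting both ranges makes the lemma unconditional.

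The step I expect to be the main obstacle is the harmonic-majorization comparison $H_w^U \le H_v^V$ and, in particular, getting the constants to line up: one needs $H_v^V$ on the inner boundary $V(r')$ to be comparable to $e^{\alpha z_N}$ (from below), which requires a lower Harnack bound propagated from $V(r)$ back to $V(r')$ through the annulus $A(r',1)$, combined with Lemma \ref{ins} to relate boundary values of a harmonic function vanishing on $E'$ to its values on $V(r)$. Care is needed because $H_v^V$ is a priori only known to exist once $\Lambda \le c_4$ (again via Lemma \ref{vV}), so the comparison argument and the case split on $\Lambda$ must be organized in the right order: first dispose of $\Lambda > c_4$ by the crude bound, then assume $\Lambda \le c_4$, invoke Lemma \ref{vV} to know $H_v^V$ exists, and only then run the maximum-principle comparison.
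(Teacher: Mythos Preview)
Your approach has a genuine gap: it loses the crucial factor of $\Lambda$ in the conclusion. If you set $v(z)=c\,e^{\alpha z_N}\chi_{V(r')}(z)$ with $c$ a constant depending only on $N,r,r'$, then Lemma \ref{vV} (after rescaling) gives
\[
H_v^V(x)\le H_v^{A(r',1)}(x)+cc_5\Lambda e^{\alpha x_N}\qquad(|x'|=r),
\]
and, as you note, $H_v^{A(r',1)}(x)\le cC(N,r,r')e^{\alpha x_N}$. Adding these produces a bound of order $e^{\alpha x_N}$, not $\Lambda e^{\alpha x_N}$. You cannot rescue this by taking $c$ proportional to $\Lambda$: the pointwise majorization $H_v^V\ge w$ on $\partial U$ that your maximum-principle comparison requires would force $c$ to dominate $\sup_y\beta_{E'}(y)$, which by Lemmas \ref{beta1} and \ref{beta2} is of order $\max_n(b_{n+1}-b_n)$, not $\Lambda=\sum_n(b_{n+1}-b_n)^2$. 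The smallness of $\Lambda$ is an \emph{integrated} feature of $\beta_{E'}$, and a pointwise comparison with a function like $c\,e^{\alpha z_N}$ cannot detect it.

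The paper's proof is a three-line direct computation that exploits exactly this integrated bound. Using the Poisson kernel estimate \eqref{pinterior} for $U$ one has, for $|x'|=r$,
\[
H_w^U(x)\le C(N,r)\int_{\partial U} w(y)e^{-\alpha|y_N-x_N|}\,d\sigma(y)
\le C(N,r)e^{\alpha x_N}\int_{\partial U}\beta_{E'}(y)\,d\sigma(y),
\]
since $e^{\alpha y_N}e^{-\alpha|y_N-x_N|}\le e^{\alpha x_N}$. Then \eqref{gamlam} gives $\int_{\partial U}\beta_{E'}\,d\sigma\le C(N,R,r,\nu)\Lambda$ (recall $\beta_{E'}\equiv0$ on $\partial B'\times(-\infty,b_1]$), and the lemma follows. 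No appeal to Lemma \ref{vV}, no smallness hypothesis on $\Lambda$, and no case split are needed.
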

\begin{proof} Using \eqref{pinterior}, in view of \eqref{ww} and \eqref{gamlam}, for $|x'|=r$ we have
\begin{equation*}
\begin{aligned}
H_w^U(x',x_N)&\leq C(N,r) \int_{\partial U} w(y)e^{-\alpha|y_N-x_N|}d\sigma(y)\\
&\leq C(N,r) e^{\alpha x_N}\int_{\partial U}\beta_{E'}(y)
d\sigma(y)\\
&\leq C(N,R,r,\nu)   e^{\alpha x_N}\Lambda.
\end{aligned}
\end{equation*}
\end{proof}

We extend $h_+$ to be $0$ outside $U$ and recall that  $V$ stands for $A(r',\infty)\setminus E'$. We define inductively a sequence $(s_k)$ as follows
\begin{equation*}s_{-2}=s_{-1}= 0,\quad s_0=h_+,\end{equation*}
\begin{displaymath}s_{2k+1}=\left\{
\begin{array}{ll}
\overline H^V_{s_{2k}}& \  \textrm{on } V\\
s_{2k}&  \ \textrm{on } \RN\backslash V
\end{array}\right.,
\ \ \ \ \ \  s_{2k+2}=\left\{
\begin{array}{ll}
\overline H^U_{s_{2k+1}}+h_+& \  \textrm{on } U\\
s_{2k+1}& \ \textrm{on } \RN\setminus U
\end{array}\right. .
\end{displaymath}
We put $s_k(\infty)=0$ for all $k$.

\begin{lem}\label{finite}There is a positive constant $c_7$, depending on $N,R,r,r'$ and $\nu$, such that, if $\Lambda\leq c_7\lambda$ for some $\lambda\in(0,1)$, then:
\begin{enumerate}
\item[(a)] $(s_k)$ is an increasing sequence of continuous functions on $\RN$;
 \item[(b)] each $s_k$ is bounded on $\RR^{N-1}\times (-\infty, b_n)$ for each $n\in \mathbb{N}$;
\item[(c)] for all
$k=0,1,...$ we have
\begin{equation*}
\begin{aligned}
&(s_{2k}-s_{2k-2})(x)\leq \lambda^{k} e^{\alpha x_N},\ \ \ &
|x'|=r.
\end{aligned}
\end{equation*}
\end{enumerate}
\end{lem}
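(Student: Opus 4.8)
The plan is to prove the three assertions of Lemma \ref{finite} simultaneously by induction on $k$, using the recursive definition of $(s_k)$ together with the comparison estimates established in Lemmas \ref{ins}, \ref{vV} and \ref{Hw}. The role of the smallness hypothesis $\Lambda \le c_7 \lambda$ is to make the geometric-series argument close: each application of the solving operators $\overline H^V$ and $\overline H^U$ to an increment bounded by $C e^{\alpha x_N}$ on $V(r)$ produces a new increment again bounded by (a constant times $\Lambda$ times) $e^{\alpha x_N}$ on $V(r)$, and choosing $c_7$ so that this constant times $\Lambda$ is at most $\lambda$ will force the contraction in (c).

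First I would record the monotonicity and continuity in (a). Since $s_0 = h_+ \ge 0 = s_{-1} = s_{-2}$, and $s_{-1} = s_{-2}$ on $\RN$, one checks inductively that $s_{2k+1} \ge s_{2k}$ (as $\overline H^V_{s_{2k}} \ge s_{2k}$ on $V$ because $s_{2k}$ is subharmonic there — here one needs $s_{2k}$ to be subharmonic on $V$, which follows from the previous stage being harmonic off the relevant boundary set and the "$+h_+$" being harmonic on $U \supset V \cap \{|x'|<1\}$, hmm, more carefully $h_+$ vanishes off $U$ so $s_{2k}$ is subharmonic across $V(1)$) and $s_{2k+2} \ge s_{2k+1}$ likewise on $U$; off the respective open sets the functions are unchanged, so the inequalities propagate. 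Continuity follows because the boundary data at each stage is continuous and the relevant boundary points of $V$ and $U$ are regular (the edges are $(N-2)$-dimensional or the cylinder boundary is smooth), so the PWB solution matches the data continuously and glues to give a continuous function on $\RN$; one should note that the exponential boundary growth is harmless since the increments are controlled by (c).

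Next, for (b), I would argue that $s_0 = h_+$ is bounded on $\RR^{N-1}\times(-\infty,b_n)$, and that applying $\overline H^V$ or $\overline H^U$ preserves this: the solution on $V$ (resp. $U$) with boundary data bounded on $\{y_N < b_n\}$ and of controlled growth for $y_N$ large is bounded on $\{y_N < b_{n'}\}$ for $n' < n$, using the Poisson-kernel decay \eqref{pinterior} and the increment bound from (c) to control the contribution of large $y_N$; since each stage only adds finitely much and the increments decay geometrically, boundedness on each lower half-strip survives. I would also use here that $\infty$ has zero harmonic measure for the relevant domains, as in Lemma \ref{ins}, Step 4.

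The heart is (c), and this is where I expect the main obstacle. For $k=1$: $s_2 - s_0 = \overline H^U_{s_1}$ on $U$, and on $V$, $s_1 = \overline H^V_{s_0} = \overline H^V_{h_+}$; since $h_+ \le e^{\alpha x_N}\chi_{V(r')}$ fails (it is supported on all of $U$, not just $V(r')$)... so instead one decomposes $s_1 - s_0$ on $V$ via the balayage identity, writing $s_1 = \overline H^V_{h_+}$ and comparing with $h_+$ itself; on $V(r')$ one gets $s_1 - h_+ \le$ harmonic measure swept from $E'$, which by Lemma \ref{ins} applied to $h_+$ (bounded on each $U_n$, vanishing on $E'$... but $h_+$ does not vanish on $E'$ — rather $s_1$ does) is controlled. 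The clean route is: the restriction of $s_1$ to $V(r')$, after subtracting $h_+$, is dominated by $H^V_{v}$ with $v \le e^{\alpha x_N}\chi_{V(r')}$, so Lemma \ref{vV} gives $s_1(x) \le H^{A(r',1)}_{s_1 \chi_{V(r')}}(x) + c_5\Lambda e^{\alpha x_N}$ on $V(r)$; feeding the $V(r)$-values into Lemma \ref{Hw} (since the restriction of $s_1$ to $\pu$ is dominated by $\beta_{E'}(y)e^{\alpha y_N}$ up to the main term, via Lemma \ref{ins}) yields $\overline H^U_{s_1} \le c_6 \Lambda e^{\alpha x_N}$ on $V(r)$, so $(s_2 - s_0)(x) \le \lambda e^{\alpha x_N}$ provided $c_7$ is chosen with $(c_5 + c_6 + \dots)c_7 \le 1$. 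For the inductive step $k \to k+1$, the increment $s_{2k+2} - s_{2k}$ satisfies the same recursion driven by $s_{2k+1} - s_{2k-1}$, which on $V(r')$ is bounded by $\lambda^k e^{\alpha x_N}$ times the harmonic-measure factor $\beta_{E'}$, and the same chain of Lemmas \ref{ins}, \ref{vV}, \ref{Hw} reproduces the bound with an extra factor $\Lambda/c_7 \le \lambda$, giving $\lambda^{k+1} e^{\alpha x_N}$. The delicate points to get right are: (i) tracking that each increment, not just the whole function, is subharmonic on the right set so the comparison principle applies; (ii) that the hypotheses of Lemma \ref{ins} (boundedness on each $U_n$, vanishing on $E'$) hold for the increments, which is exactly where (a) and (b) are needed as part of the simultaneous induction; and (iii) choosing the single constant $c_7$ at the end to absorb all the accumulated constants $c_5, c_6$ and the $\nu$-dependent constant from \eqref{gamlam}.
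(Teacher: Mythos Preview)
Your overall architecture matches the paper's: simultaneous induction on $k$, with Lemmas \ref{ins}, \ref{vV} and \ref{Hw} supplying the contraction in (c). There is, however, a genuine gap in your treatment of continuity in (a). You assert that regularity of the edge points $x\in\partial B'\times\{b_n\}$ suffices, but regularity only forces $H^V_f\to f(x)$ for \emph{bounded} boundary data, whereas here $s_{2k}$ grows like $e^{\alpha y_N}$ along $V(r')$; and Armitage's strong-regularity theorem, which the paper invokes on the rest of $\partial V$, does not apply at these non-smooth edge points. The paper closes this by applying Lemma \ref{ins} to the truncations $v_j=H^V_{\min\{s_{2k},j\}}$, obtaining $v_j(y)\le c_3\, v_j(rx',x_N)\,H^{T_x\setminus E'}_{\chi_{\partial T_x}}(y)$ uniformly in $j$; letting $j\to\infty$ dominates $s_{2k+1}$ near $x$ by a bounded multiple of a harmonic measure that tends to $0$ at the regular point $x$. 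Without this step you cannot conclude that $s_{2k+1}$ vanishes continuously on $E'$, and then both your subharmonicity-based monotonicity argument and your later application of Lemma \ref{ins} to $s_{2k+1}-s_{2k-1}$ in (c) (which requires vanishing on $E'$ and boundedness on each $U_n$) are unjustified.

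A secondary point of confusion: in your inductive step for (c) you place the factor $\beta_{E'}$ on $V(r')$, but it only enters on $\partial U$ via Lemma \ref{ins}. The paper's order is: first bound $(s_{2k+1}-s_{2k-1})$ on $V(r)$ using Lemma \ref{vV} together with the observation that the harmonic function $s_{2k}-s_{2k-1}$ on $A(r',1)$ lies in the upper class for $H^{A(r',1)}_{(s_{2k}-s_{2k-2})\chi_{V(r')}}$ (it vanishes on $\partial U$ and equals $s_{2k}-s_{2k-2}$ on $V(r')$), giving the clean estimate $(s_{2k+1}-s_{2k})(x)\le c_5c_8\lambda^k\Lambda\, e^{\alpha x_N}$ on $V(r)$; only then does Lemma \ref{ins} produce the $\beta_{E'}$ factor on $\partial U$, after which Lemma \ref{Hw} closes the step. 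Your sketch has the right ingredients but in a garbled order that would not compile into a proof as written.
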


\begin{proof} We will use ideas from \cite[Lemma 3.1]{gp}. Suppose that $\Lambda\leq c_7\lambda$,
where $c_7$ is to be determined later.  Assume that
 $s_0\leq s_1\leq ...\leq s_{2k}$ on $\RN$ for some $k\geq 0$,  that all the functions $s_{k'}$ are continuous on $\RN$ for $0\leq k'\leq 2k$, and that for  $0\leq k'\leq k$
 \begin{equation}
\begin{aligned}
&(s_{2k'}-s_{2k'-2})(x',x_N)\leq \lambda^{k'} e^{\alpha x_N}\
&(|x'|=r).\label{2.14}
\end{aligned}
\end{equation}
We also fix $n\in \mathbb{N}$ and assume that $s_{2k}$ is bounded on $\RR^{N-1}\times (-\infty,b_n)$.  Once the terms of $(s_k)$ are seen to be finite, it is clear that the upper PWB solutions appearing in their definitions
are actually well defined PWB solutions. The induction  hypotheses clearly hold for $k=0$. We split the proof of
Lemma \ref{finite} into three steps.

\nt \textit{Step 1.} We show that $s_{2k+1}$ is a finite-valued continuous function on $\RN$ which is bounded on $\RR^{N-1}\times (-\infty, b_n)$.
Harnack's inequalities and \eqref{2.14} yield the existence of a constant
$c_8=c_8(N,r,r')>0$ such that
\begin{equation}
\begin{aligned}
&(s_{2k}-s_{2k-2})(y)\leq c_8 \lambda^{k} e^{\alpha y_N}\ \
\ (|y'|=r').\label{har}
\end{aligned}
\end{equation}
Now, for $|x'|=r$, by \eqref{har} and Lemma \ref{vV} we have
\begin{equation*}
\begin{aligned}
(s_{2k+1}-s_{2k-1})(x)&\leq \overline H_{s_{2k}-s_{2k-2}}^V(x)\\&
=\overline H_{(s_{2k}-s_{2k-2})\chi_{V(r')}}^V(x)\\&
\leq c_5c_8\lambda^k\Lambda e^{\alpha x_N}+ H_{(s_{2k}-s_{2k-2})\chi_{V(r')}}^{A(r',1)}(x).
\end{aligned}
\end{equation*}
Since $s_{2k}-s_{2k-1}=0$ on $\pu$ and $s_{2k}-s_{2k-1}=s_{2k}-s_{2k-2}$ on $V(r')$, it follows that $s_{2k}-s_{2k-1}$ belongs to the upper class for $H_{(s_{2k}-s_{2k-2})\chi_{V(r')}}^{A(r',1)}$. Hence
$$(s_{2k+1}-s_{2k-1})(x)\leq c_5c_8\lambda^k\Lambda e^{\alpha x_N}+(s_{2k}-s_{2k-1})(x),$$
and so
\begin{equation}\label{s2kplus1}(s_{2k+1}-s_{2k})(x)\leq c_5c_8\lambda^k\Lambda e^{\alpha x_N}\ \ \ (|x'|=r).
\end{equation}
This proves finiteness of $s_{2k+1}$.

A result of Armitage concerning a strong type of regularity for the PWB solution of the Dirichlet problem (see \cite[Theorem 2]{arm}) implies that $s_{2k+1}$ is continuous at points of $\partial V\setminus \bigcup_{n=1}^\infty(\partial B'\times\{b_n\})$.  Applying Lemma \ref{ins} to $v_j=H^V_{\min\{s_{2k},j\}}$ and $x\in \bigcup_{n=1}^\infty(\partial B'\times\{b_n\})$ we obtain
$$v_j(y)\leq c_3 v_j(rx',x_N)H^{T_x\setminus E'}_{\chi_{\partial T_x}}(y)\ \ \ (y\in T_x\setminus E').$$
Letting $j\rightarrow \infty$ we notice that the same inequality holds for $s_{2k+1}$, and hence the regularity of $x$
 for $T_x\setminus E'$ implies that $s_{2k+1}$ vanishes at $x$. We conclude that $s_{2k+1}$ is continuous on $\RN$.

 We also have $s_{2k+1}=H_{s_{2k+1}}^{V\cap [\RR^{N-1}\times (-\infty, b_n)]}$ on $V\cap [\RR^{N-1}\times (-\infty, b_n)]$. Further, since $s_{2k+1}$ is continuous on $\overline{B'}\times \{b_n\}$, vanishes on $E$ and is bounded on $(\RN\setminus V)\cap [\RR^{N-1}\times (-\infty, b_n)]$ in view of the induction hypothesis, we deduce that $s_{2k+1}$ is bounded above on $\RR^{N-1}\times (-\infty, b_n)$.

\nt \textit{Step 2.} We now prove that $s_{2k}\leq s_{2k+1}\leq s_{2k+2}$ on $\RN$. We note that  $s_{2k}=H^{A(r',1)}_{s_{2k}}$ on $A(r',1)$ (for a simple proof see Step 2 in the proof of \cite[Lemma 3.1]{gp}).

 It follows immediately from the induction hypothesis, that
$$s_{2k+1}=H^V_{s_{2k}}\geq H^V_{s_{2k-2}}=s_{2k-1}\ \ \ \mbox{on}\ \ V.$$
In particular, this gives
$s_{2k+1}\geq s_{2k}$ on $\RN \setminus U$. Hence, $s_{2k+1}\geq s_{2k}$
on $\partial U \cup
\partial V$. Using \cite[Theorem 6.3.6]{armgar}, we  obtain
$$s_{2k+1}=H^V_{s_{2k}}= H^{A(r',1)}_{s_{2k+1}}\geq H^{A(r',1)}_{s_{2k}}=s_{2k} \ \ \textrm{ on }\ \ A(r',1).$$
Therefore, $s_{2k+1}\geq s_{2k}$ on
$\RN$. We now  deduce that
$$s_{2k+2}=\overline H^U_{s_{2k+1}}+h_+\geq H^U_{s_{2k-1}}+h_+=s_{2k}=s_{2k+1}\ \textrm{ on } \ \RN\setminus V.$$
 We finally note that, if $s_{2k+2}$ belongs to the upper class for $\overline{H}^{A(r',1)}_{s_{2k+2}}$, we obtain
$$s_{2k+2}\geq \overline{H}^{A(r',1)}_{s_{2k+2}}\geq H^{A(r',1)}_{s_{2k+1}}=s_{2k+1}\ \ \ \mbox{on}\ \ A(r',1),$$ and so $s_{2k+2}\geq s_{2k+1}$ on $\RN$.
To verify that $s_{2k+2}$ belongs to the upper class for $\overline{H}^{A(r',1)}_{s_{2k+2}}$ it is enough to check that $\liminf_{x\rightarrow y}s_{2k+2}(x)\geq s_{2k+2}(y)$ for $y\in \pu$. This is clear from regularity and the continuity of $s_{2k+1}$, as if $s_{2k+2}\not\equiv+\infty$, then for $y\in\pu$ we have
\begin{equation*}
\begin{aligned}
\liminf_{x\rightarrow y}s_{2k+2}(x)=\liminf_{x\rightarrow y}H^U_{s_{2k+1}}(x)\geq
\liminf_{x\rightarrow y, x\in\pu}{s_{2k+1}}(x)=s_{2k+1}(y)=s_{2k+2}(y).
\end{aligned}
\end{equation*}

\nt \textit{Step 3.} In the final step we will prove that
\begin{equation}\label{diffe}
(s_{2k+2}-s_{2k})(x)\leq \lambda^{k+1} e^{\alpha x_N} \ \ \ (|x'|=r).
\end{equation}
Then, using \cite[Theorem 2]{arm}, we can conclude that $s_{2k+2}$ is continuous on $\RN$. Further,  $s_{2k+2}-h_+=H^U_{s_{2k+1}}=H^{U\cap [\RR^{N-1}\times (-\infty, b_n)]}_{s_{2k+2}-h_+}$ on $U\cap [\RR^{N-1}\times (-\infty, b_n)]$. By continuity, $s_{2k+2}$ is bounded on $\overline{B'}\times\{b_n\}$. On $\RN\setminus U$ we have $s_{2k+2}=s_{2k+1}$,  which is bounded on $(\RN\setminus U)\cap[\RR^{N-1}\times (-\infty, b_n)]$ by Step 1. Hence $s_{2k+2}$ is bounded on the whole of $\RR^{N-1}\times (-\infty, b_n)$.

 To prove the desired inequality \eqref{diffe}, we first recall that  $$U_m=(\RR^{N-1}\setminus \overline{B'})\times (b_m,b_{m+1}) \ \ \ (m\in \mathbb{N}).$$ Noting that $$s_{2k+1}=H^V_{s_{2k}}=H^{U_m}_{s_{2k+1}}=H^{U_m}_{s_{2k+1}\chi_{\partial B'\times(b_m,b_{m+1})}}\ \ \ \mbox{on} \ \ U_m,$$
 and that, by continuity, $s_{2k+1}$ is bounded on $\partial B'\times(b_m,b_{m+1})$, we see that $s_{2k+1}-s_{2k-1}$ satisfies the hypotheses of Lemma \ref{ins}.
 Hence, for $x\in\pu$, we have
 $$\begin{aligned}
(s_{2k+1}-s_{2k-1})(x)& \leq c_3 \beta_{E'}(x)(s_{2k+1}-s_{2k-1})(rx',x_N)\\&
=c_3 \beta_{E'}(x)[(s_{2k+1}-s_{2k})(rx',x_N)+(s_{2k}-s_{2k-1})(rx',x_N)]\\
&\leq c_3 \beta_{E'}(x)[(s_{2k+1}-s_{2k})(rx',x_N)+(s_{2k}-s_{2k-2})(rx',x_N)].
\end{aligned}
$$
It follows from \eqref{s2kplus1} and our induction hypothesis that
$$(s_{2k+1}-s_{2k-1})(x)\leq c_3(c_5 c_8\Lambda+1)\lambda^{k}e^{\alpha x_N}\beta_{E'}(x) \ \ (x\in\pu).$$
Assuming that $c_7\leq 1$ and letting $c_9=c_3(c_5 c_8+1)$ we obtain
$$(s_{2k+1}-s_{2k-1})(x)\leq c_9\lambda^{k}e^{\alpha x_N}\beta_{E'}(x) \ \ (x\in\pu).$$
 By Lemma \ref{Hw}, for $|x'|=r$, we have
 $$(s_{2k+2}-s_{2k})(x)\leq \overline{H}^U_{s_{2k+1}-s_{2k-1}}(x)\leq c_9\lambda^k c_6\Lambda e^{\alpha x_N}=c_6 c_7 c_9 \lambda^{k+1}e^{\alpha x_N}.$$
 Taking $c_7=\min\{1, (c_6c_9)^{-1}\}$ we find that \eqref{diffe} holds, and the proof is complete. \end{proof}

\section{Proof of Theorem \ref{big}}

 Proposition \ref{aimpliesb} gives the implication $(a)\Rightarrow (b)$.  To prove that $(b)\Rightarrow (a)$  we first observe  that taking $J$ large enough when setting $b_1=a_J$, we can ensure that $\Lambda\leq c_7\lambda$ for some $\lambda\in (0,1)$.
Let $\Omega'=\RN\setminus E'$ and $\displaystyle{u'=\lim_{k\rightarrow
\infty}s_{k}}$.  By Lemma \ref{finite}, for $|x'|=r$ we obtain
\begin{equation*}
s_{2k}(x)=\sum_{j=0}^{k}(s_{2j}-s_{2j-2})(x)\leq
\sum_{j=0}^{k}\lambda^j e^{\alpha x_N}\leq \frac{1}{1-\lambda}
e^{\alpha x_N}.
 \end{equation*}
Hence $u'\not\equiv+\infty$. As a limit of an increasing sequence
$(s_{2k})$ of harmonic functions on $U$, the function $u'$ is harmonic on $U$.  Since $u'$ is the limit of an increasing sequence
$(s_{2k+1})$ of harmonic functions on $V$, it is also harmonic on
$V$.
Hence $u'$ is harmonic in $\Omega'$. It  follows from the
monotonicity of $(s_k)$ that $u'\geq h_+$ on $U$.

For $x\in E'$ we have $u'(x)=0$. By the monotone convergence theorem applied to the equation $s_{2k+1}=H^V_{s_{2k}}$  we obtain $u'=H^V_{u'}$ on $V$. We can follow the reasoning from the second last paragraph of Step 1 in the proof of Lemma \ref{finite} to see that $u'$ vanishes continuously on $E'$.

We next prove that $u'$ is minimal on $\Omega'$ using an argument from \cite[Theorem 1.1]{gp}.
As a consequence of the monotone
convergence theorem we find that
\begin{equation}
u'(x)=H^U_{u'}(x)+h_+(x) \ \ \ \ (x\in U).\label{2.23}
\end{equation}
Let $\Delta_1$ denote the minimal Martin boundary of $\Omega'$ and let
$M$ be the Martin kernel of $\Omega'$  relative to the origin.
 By the Martin representation theorem (see \cite[Theorem 8.4.1]{armgar})
we have
\begin{equation}
u'(x)=\int_{\Delta_1}M(x,z)d\nu_{u'}(z)\ \ \ (x\in
\Omega'),\label{2.24}
\end{equation}
where $\nu_{u'}$ is uniquely determined by $u'$.

We define $T=\{z\in \Delta_1: \Omega'\backslash U\textrm{ is minimally
thin at }z\}$ so that
\begin{equation}
R_{M(\cdot, z)}^{\Omega'\backslash U}=M(\cdot,z)\ \ \ \
(z\in\Delta_1\setminus T).\label{2.25}
\end{equation}
Changing the order of integration, and using \eqref{2.23}-\eqref{2.25} and \cite[Theorem 6.9.1]{armgar},
 we obtain
\begin{equation*}
\begin{aligned}
h_+(x)&=\int_{\Delta_1}\left(
M(x,z)-\int_{\partial U} M(y, z)d\mu_x^U(y)\right) d\nu_{u'}(z)\\
&=\int_{\Delta_1}\left(
M(x,z)-R_{M(\cdot, z)}^{\Omega'\backslash U}(x)\right) d\nu_{u'}(z)\\
& =\int_{T}\left(M(x,z)-R_{M(\cdot, z)}^{\Omega'\backslash
U}(x)\right)d\nu_{u'}(z) \ \ \ \ (x\in U).
\end{aligned}
\end{equation*}
We now claim that  $\nu_{u'}|_T$ is concentrated at a single point. For the sake of contradiction suppose that there are two distinct points
$y_1,y_2\in \Delta_1\cap{\rm supp}(\nu_{u'}|_T)$  and let $N_1, N_2$ be disjoint neigbourhoods of $y_1$ and
$y_2$ respectively. We define
\begin{equation*}
h_j(x)=\int_{N_j\cap T}\left(M(x,y)-R_{M(\cdot,
y)}^{\Omega'\setminus U}(x)\right)d\nu_{u'}(y) \ \ \ \ (x\in \Omega',
j=1,2),
\end{equation*}
and note that $h_j\leq h_+$ on $U$. Minimality of $h_+$ on $U$ implies that
\begin{equation}\label{hjj}h_j/h_j(0)=h_+ \ \ \mbox{on}\  U \ \ (j=1,2).
\end{equation}
We now define
$$
v_j(x)=\int_{N_j\cap T}M(x,y)d\nu_{u'}(y) \quad (x\in\Omega', j=1,2).
$$
Then $h_j\leq v_j\leq u'$ on  $\Omega'$, and by \eqref{hjj},
$v_j/h_j(0)\geq h_+$ on $\Omega'$ ($j=1,2$). In view of  the
definition of $s_k$ we have $v_j/h_j(0)\geq s_k$ on $\Omega'$
for all $k\in \mathbb{N}$  and so $v_j/h_j(0)\geq u'$  on $\Omega'$
($j=1,2$). It follows that  $h_1(0)v_2\leq v_1$ on $\Omega'$.
This implies that $\nu_{u'}|_{T\cap N_1}$ is minorized by a multiple
of $\nu_{u'}|_{T\cap N_2}$, which contradicts the fact that $N_1\cap N_2=\emptyset$.
Hence  $\nu_{u'}|_{T}=c\delta_{t'}$ for some $t'\in T$ and $c>0$.
Furthermore, the minimal harmonic
function $v=cM(\cdot,t')$ on $\Omega'$ satisfies $u'\geq v$ on $\Omega'$ and $v\geq
h_+$ on $U$. We observe that $v\geq
s_k$ on $\Omega'$ for all $k\in \mathbb{N}$, and so $v\geq u'$. Hence
$v\equiv u'$ and we conclude that $u'$ is minimal on $\Omega'$.

Let $\Omega''=\RN \setminus E''$. We define $g=H^{\Omega'}_{\chi_{\Omega''\setminus \Omega'}}$ and $g=\chi_{\Omega''\setminus \Omega'}$ on $\partial^{\infty}\Omega'$. By \cite[Theorem 6.9.1]{armgar} we have
$g=R^{\Omega''\setminus\Omega'}_1$ on $\Omega''$ (reductions with respect to non-negative superharmonic functions on $\Omega''$). Since $\Omega''\setminus \Omega'$ is non-thin at each constituent point, it follows from \cite[Theorem 7.3.1(i)]{armgar} that $R^{\Omega''\setminus\Omega'}_1=\widehat{R}^{\Omega''\setminus\Omega'}_1$ on $\Omega''$ and so $g$ is superharmonic there. Let $h$ be a non-negative harmonic minorant of $g$ on $\Omega''$. Then $h$ is bounded on $\Omega''$ and vanishes quasi-everywhere on $\partial \Omega''$.  Since a polar subset of $\partial \Omega''$ and $\{\infty\}$ are both negligible for $\Omega''$ (see \cite[Theorems 6.5.5 and 7.6.5]{armgar}), we deduce that $h\equiv 0$. Hence $g$ is a potential on $\Omega''$.

Let $W=[\RR^{N-1}\times(-\infty,b_n)]\cap \Omega'$ for some $n>1$. Since $1-g$ is positive and continuous on $\overline{B'}\times\{b_n\}$, it follows that $1-g$ is bounded below by a positive constant on this set while $u'$ is bounded from above there. Hence there exists a positive constant $c$ such that $c(1-g)\geq u'$ on $\overline{B'}\times\{b_n\}$, and thus on $\partial W$.  By Lemma \ref{finite}(b) each $s_k$ is bounded on $W$ and so it belongs to the lower class for $H^W_{s_k}$. These facts combined with monotonicity of $(s_k)$ lead to the observation that
$$s_k\leq H^W_{s_k}\leq H^W_{u'}\leq c H^W_{1-g}=c(1-g) \ \ \mbox{on} \ \ W.$$
Therefore, $u'\leq c(1-g)$ on $W$. Since $c(1-g)-u'$ is a non-negative harmonic function on $W$ which vanishes on $\Omega''\setminus \Omega'$, we conclude that $c(1-g)-u'$ is subharmonic on $\Omega''$, so that $u'+cg$ is superharmonic on $\Omega''$.

By the Riesz decomposition,
\begin{equation}\label{rieszz}
u'+cg=u''+G_{\Omega''}\mu \ \ \mbox{ on} \ \ \Omega'',
\end{equation}
where $u''$ is the greatest harmonic minorant of $u'+cg$ on $\Omega''$ and $G_{\Omega''}\mu$ is the Green potential of the Riesz measure $\mu$ associated with  $u'+cg$. Hence $u''$ vanishes on $E''\setminus (\partial B'\times \{b_1\})$ and for each $n\in \mathbb{N}$ it is bounded on $\RR^{N-1}\times(-\infty, b_n)$. It follows from a removable singularity result (see \cite[Theorem 5.2.1]{armgar}) that $u''$ extends to a subharmonic function on $\RN$. This together with the non-thinness of $E''$ at points of $\partial B'\times \{b_1\}$ implies that $u''$ vanishes also on $\partial B'\times \{b_1\}$.
Since $h_+$ is a subharmonic minorant of $u'+cg$ on $\Omega''$, we deduce that $h_+\leq u''$ on $\Omega''$.

It remains to show that $u''$ is minimal. Let $h$ be a positive harmonic minorant of $u''$ on $\Omega''$. We notice that $h$ is bounded on $\Omega''\setminus \Omega'$ and vanishes on $\partial \Omega''$. Hence the greatest harmonic minorant of $R^{\Omega''\setminus \Omega'}_h$ on $\Omega''$ is bounded and vanishes on $\partial \Omega''$, and we see that $R^{\Omega''\setminus \Omega'}_h$ is a potential on $\Omega''$. Since the upper-bounded  harmonic function  $h-R^{\Omega''\setminus \Omega'}_h-u'$ on $\Omega'$ satisfies $$\limsup_{x\rightarrow y} (h-R^{\Omega''\setminus \Omega'}_h-u')(x)\leq 0 \ \ \ \mbox{for} \ \ y\in\partial \Omega',$$
and $\{\infty\}$ has zero harmonic measure for $\Omega'$, it follows that
$$h-R^{\Omega''\setminus \Omega'}_h-u'\leq 0 \ \ \mbox{on}\ \ \Omega'.$$

Now, since $h-R^{\Omega''\setminus \Omega'}_h$ is a positive harmonic minorant of the minimal function $u'$ on $\Omega'$, we conclude that $h-R^{\Omega''\setminus \Omega'}_h=au'$ for some $a\in(0,1]$. Substituting this into \eqref{rieszz} we obtain
$$h+acg=au''+aG_{\Omega''}\mu+R^{\Omega''\setminus \Omega'}_h \ \ \mbox{on } \Omega''.$$
Taking the greatest harmonic minorant in $\Omega''$ of both sides we get $h=au''$, which means that $u''$ is minimal.

Let $u=u''-H^\Omega_{u''}$.
 Since $u''-h_+\geq 0$ is superharmonic on $\Omega''$
 and equals $u''$ on $\Omega''\setminus\Omega$, we have
 $$u=u''-R^{\Omega''\setminus \Omega}_{u''}=u''-R^{\Omega''\setminus \Omega}_{u''-h_+}\geq h_+.$$
 Since the points of $\partial \Omega$ are regular for $\Omega$ and $u''$ is continuous, it follows that $u$ vanishes on $\partial \Omega$. Further, \cite[Theorem 9.5.5]{armgar} shows that $u$ is minimal.
\medskip

\nt \emph{Remark.} The proof of the implication $(a)\Rightarrow (b)$ in Theorem \ref{big} does not rely on condition \eqref{r}. It is in the proof of the converse that our methods rely on such a condition.  However, it is enough to assume merely that $\Omega$  is contained in a comb-like domain $\Omega_0$ for which \eqref{r} holds. To see this, suppose that $(b)$ holds. Theorem \ref{big} applied to $\Omega_0$ yields the existence of a minimal harmonic function $u_0$ on $\Omega_0$ which vanishes on $\partial \Omega_0$ and satisfies $u_0\geq h_+$. Let $u=u_0-H_{u_0}^{\Omega}$ on $\Omega$. The argument from the previous paragraph shows that $u$ is as stated in $(a)$.

\begin{singlespace}

\end{singlespace}
\end{document}